\newtheorem{thrm}{Theorem}[section]
\newtheorem{corl}{Corollary}[section]
\newtheorem{lemm}{Lemma}[section]
\newtheorem{prop}{Proposition}[section]
\newtheorem{defn}{Definition}[section]
\newtheorem{exmp}{Example}[section]
\newtheorem{rmrk}{Remark}[section]
\newenvironment{proof}{{\bf Proof.}}{$\blacksquare$\medskip}
\newcommand{\Inf}{\mathrm{Inf}\,}
\begin{document}

\begin{center}
{\bf\Large A decomposition of directed graphs and the Tur{\' a}n problem}
\end {center}

\begin{center}
{\bf B.~V.~Novikov, L.~Yu.~Polyakova, G.~N.~Zholtkevich} (Karazin National University, Kharkov, Ukraine)
\end{center}

\medskip

\begin{abstract}
We consider vertex decompositions of (di)graphs which appear in
Auto\-mata Theory, and establish some their properties. Then we apply them to the
problem of forbidden subgraphs.
\end{abstract}

\medskip

\section*{Introduction}

This note has arisen from attempts to extend on pre-automata \cite{dnz} the con\-cepts
of regions and intervals used in the translation theory \cite{bi::dragon}. Unlike the
known model, the uniqueness  of the header of an interval is an unacceptable condition
for pre-automata. So we had to consider a generalized problem; and it was convenient to
collect obtained graph-theoretic results in a separate article.

General definitions and results are given in Section~\ref{sec1}. Regions and inter\-vals
are considered in Section~\ref{sec2} as a special case. Next we study the decompo\-sitions
of undirected graphs (Section~\ref{sec3}) and in particular consider their connection with maximal matchings. In Section~\ref{sec4} we study the main applica\-tion of decompositions --- the problem of forbidden graphs. Note that this problem can be posed also for digraphs. We hope
that in this case our construction will be even more useful.

\medskip

Mainly, we will use the definitions and notations of \cite{bol}.
Thus by the {\bfseries directed graph} (or {\bfseries digraph}) we
mean a pair $G = (V, E)$ where $V = V (G)$ is a set whose elements
are called {\bfseries vertices}; $E = E(G)\subset V \times V$ is a
binary relation whose elements are called {\bfseries arcs}.

For every vertex $v\in V$ define the sets ${\rm D^-}(v) =\{u\in
V\mid (u,v)\in E\}$ and ${\rm D^+}(v) =\{u\in V\mid (v,u)\in E\}$
whose elements are called the {\bfseries inputs} and {\bfseries
outputs} of the vertex  $v$ respectively. Note that loops are
included both in ${\rm D^-}(v)$ and ${\rm D^+}(v)$.

The numbers of inputs and outputs are denoted by ${\rm d^-}(v)$
and ${\rm d^+}(v)$ respectively.

The subgraph of (di)graph $G$ generated by a subset of vertices $U\subset V$ is denoted
by $G[U]$. A subset $U\subset V$ is called {\bfseries connected} if the graph $G[U]$ is
connected (i.\,e. for any two vertices  $u,v \in U$ there exists a directed path in
$G[U]$ starting at $u$ and ending at  $v$).

The symbol $\bigsqcup$ is used for the union of disjoint sets.

\section{Inflation and stability}\label{sec1}

Let $G = (V, E)$ be a digraph.
\begin{defn}\label{df::inflation}
An {\bfseries inflation} of a set $U\subset V$ is a set
$$
\Inf U = U\cup \{v\in V\mid \varnothing\ne {\rm D^-}(v)\subseteq U\}.
$$
\end{defn}

We need a property of the inflation:

\begin{prop}\label{prop::infmeet}
For any subsets  $X,Y\subset V(G)$
$$
\Inf X \cap \Inf Y= (X\cap\Inf Y)\cup (\Inf X \cap Y)\cup \Inf (X\cap Y).
$$
\end{prop}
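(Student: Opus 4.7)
The plan is to prove the identity by double inclusion, with a straightforward case analysis keyed to the two ``reasons'' a vertex can belong to an inflation (either it lies in the original set, or it has a nonempty preimage contained in the set).

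For the inclusion $\Inf X \cap \Inf Y \subseteq (X\cap\Inf Y)\cup (\Inf X \cap Y)\cup \Inf (X\cap Y)$, I would pick $v \in \Inf X \cap \Inf Y$ and split into four subcases according to whether $v$ lies in $X$ and/or $Y$. Three of the subcases are immediate: if $v \in X \cap Y$ then $v \in X\cap Y \subseteq \Inf(X\cap Y)$; if $v \in X$ but $v \notin Y$ then $v \in X \cap \Inf Y$; and symmetrically if $v \in Y$ but $v \notin X$ then $v \in \Inf X \cap Y$. The only case requiring thought is $v \notin X \cup Y$: then Definition~\ref{df::inflation} forces $\varnothing \ne D^-(v) \subseteq X$ and $\varnothing \ne D^-(v) \subseteq Y$, so $\varnothing \ne D^-(v) \subseteq X \cap Y$, whence $v \in \Inf(X\cap Y)$.

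For the reverse inclusion, I would verify each of the three pieces on the right-hand side separately. The first two pieces are trivial since $X \subseteq \Inf X$ and $Y \subseteq \Inf Y$. For the third piece, if $v \in \Inf(X \cap Y)$ then either $v \in X\cap Y \subseteq \Inf X \cap \Inf Y$, or $\varnothing \ne D^-(v) \subseteq X \cap Y$, in which case $D^-(v) \subseteq X$ and $D^-(v) \subseteq Y$ give $v \in \Inf X \cap \Inf Y$ directly.

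The only mildly delicate point is the non-emptiness requirement in the definition of $\Inf$: one must remember to extract ``$D^-(v) \ne \varnothing$'' from the hypothesis $v \in \Inf X \setminus X$ before claiming $v \in \Inf(X\cap Y)$. Apart from this, the proof is purely formal set algebra guided by the two-clause definition of $\Inf$.
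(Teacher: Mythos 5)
Your proof is correct and follows essentially the same route as the paper's: the reverse inclusion and the cases with $v\in X\cup Y$ are dispatched as evident, and the only substantive case is $v\notin X\cup Y$, where $\varnothing\ne {\rm D^-}(v)\subseteq X\cap Y$ gives $v\in\Inf(X\cap Y)$. You merely spell out the easy subcases that the paper leaves implicit.
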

\begin{proof} The inclusion $\supseteq$ is obvious. Prove the converse.
The case $v\in X\cup Y$ is also evident. Let $v\in (\Inf X \cap \Inf
Y)\setminus (X\cup Y)$. Then by the definition of inflation
$\varnothing \ne {\rm D^-}(v)\subset X \cap Y$, i.\,e. $v\in \Inf
(X\cap Y)$.
\end{proof}

We can consider the operator
$\Inf:\mathbf{2}^V\rightarrow\mathbf{2}^V: U\mapsto\Inf{U}$ and its
iterations $\Inf^n$ $(n>0)$. Suppose, in addition,  $\Inf^0 U=U$.

\begin{defn}\label{df::hyper}
A {\bfseries hyperinflation} of a subset $U$ is a set
$$
\Inf^{\infty} U = \bigcup_{n\ge 0} \Inf^n U.
$$
\end{defn}
Sometimes we say that $U$ is a hyperinflation if $U=\Inf^{\infty}
U'$ for some $U'$.

The following statement will be used bellow:

\begin{lemm}\label{lemm::entry} Let $U\subset V$ and $v\in V\setminus \Inf^{\infty} U$.
Then ${\rm D^+}(v)\cap\Inf^{\infty} U \subset U$.
\end{lemm}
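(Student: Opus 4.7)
The plan is to take an arbitrary $w\in D^+(v)\cap\Inf^{\infty}U$ and show that in fact $w\in U$. Since $w$ lies in the hyperinflation, the set $\{n\ge 0\mid w\in\Inf^n U\}$ is nonempty; let $n$ be its minimum. The whole proof reduces to showing $n=0$.

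Suppose for contradiction that $n\ge 1$. Then $w\in\Inf^{n}U=\Inf(\Inf^{n-1}U)$ but $w\notin\Inf^{n-1}U$ by minimality of $n$. Unpacking Definition~\ref{df::inflation}, this forces $\varnothing\ne D^-(w)\subseteq\Inf^{n-1}U$. Now the hypothesis $w\in D^+(v)$ means $(v,w)\in E$, i.e.\ $v\in D^-(w)$, and so $v\in\Inf^{n-1}U\subseteq\Inf^{\infty}U$, contradicting $v\in V\setminus\Inf^{\infty}U$. Hence $n=0$ and $w\in U$, as required.

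There is no real obstacle here: the only subtle point is keeping straight that the second clause in the definition of $\Inf$ applies precisely to vertices of $\Inf^{n}U\setminus\Inf^{n-1}U$, which is exactly what lets us pull $v$ back one inflation level. The nonemptiness requirement on $D^-(w)$ is not used but is consistent, since $v\in D^-(w)$ makes it automatic in the step above.
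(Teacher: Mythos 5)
Your proof is correct and follows essentially the same route as the paper's: pick a minimal $n$ with $w\in\Inf^n U$, use the definition of inflation to get $D^-(w)\subseteq\Inf^{n-1}U$, and derive the contradiction $v\in\Inf^{n-1}U$. You are in fact slightly more careful than the paper in making the minimality of $n$ explicit.
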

\begin{proof}
The case ${\rm D^+}(v)\cap\Inf^{\infty} U =\varnothing$ is obvious.
Let $u\in {\rm D^+}(v)\cap\Inf^{\infty} U \ne\varnothing$, $u\not\in
U$. Then $u\in \Inf^n U = \Inf (\Inf^{n-1} U)$ for some $n\ge 0$.
This contradicts the fact that ${\rm D^-}(u)\ni v \not\in \Inf^{n-1}
U$.
\end{proof}

We define the notion of a hull which is close to the hyperinflation.

\begin{defn}\label{df::stable}
A set $U\subset V$ is called {\bfseries stable} if $\Inf U = U$.
\end{defn}

\begin{lemm}\label{lemm::intersec} An intersection of stable sets is stable.
\end{lemm}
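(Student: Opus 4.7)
The plan is to show the nontrivial inclusion $\Inf W \subseteq W$ directly from the definition of stability, where $W$ denotes the intersection $\bigcap_{i\in I} U_i$ of an arbitrary family of stable sets. The opposite inclusion $W \subseteq \Inf W$ is immediate from Definition~\ref{df::inflation}, so the whole content of the lemma lies in this one direction.

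First I would pick an arbitrary $v \in \Inf W$ and split into the two cases allowed by Definition~\ref{df::inflation}. If $v \in W$, there is nothing to prove. Otherwise $v$ comes from the second part of the inflation, so $\varnothing \ne {\rm D^-}(v) \subseteq W$. Since $W \subseteq U_i$ for every $i \in I$, this gives $\varnothing \ne {\rm D^-}(v) \subseteq U_i$, hence $v \in \Inf U_i$. Using that each $U_i$ is stable, $\Inf U_i = U_i$, so $v \in U_i$ for every $i$, and therefore $v \in W$. This closes the argument.

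An alternative route would be to apply Proposition~\ref{prop::infmeet} to two stable sets $X$ and $Y$: the right-hand side collapses to $(X\cap Y) \cup \Inf(X\cap Y) = \Inf(X\cap Y)$, while the left-hand side is $X\cap Y$, so $\Inf(X\cap Y) = X\cap Y$. This, however, yields only the case of finite intersections without an extra induction/limit step, whereas the direct argument above handles arbitrary families uniformly; for that reason I would prefer the direct proof.

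There is no real obstacle here; the only thing to be careful about is the nonemptiness condition $\varnothing \ne {\rm D^-}(v)$ in the definition of $\Inf$, which is preserved when one passes from $W$ to any larger set $U_i$, so the implication $v \in \Inf W \Rightarrow v \in \Inf U_i$ really does go through.
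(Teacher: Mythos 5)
Your main argument is correct and is essentially identical to the paper's proof: take $v\in\Inf W\setminus W$, note $\varnothing\ne {\rm D^-}(v)\subseteq W\subseteq U_i$, conclude $v\in\Inf U_i=U_i$ for every $i$, hence $v\in W$. The aside about Proposition~\ref{prop::infmeet} is a reasonable observation but not needed; the direct argument matches the paper.
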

\begin{proof} Let $U_i\ (i\in I)$ be stable sets,
$\displaystyle X=\bigcap_{i\in I}U_i$, and $v\in \Inf X\setminus
X$. By the definition of the inflation $\varnothing\ne {\rm
D^-}(v)\subseteq X\subseteq U_i$ for every $i\in I$. Therefore,
$v\in \Inf U_i=U_i$ whence $v\in X$.
\end{proof}

Since the set $V$ of vertices is stable, we have

\begin{corl}\label{corl::stable} For every vertex set $U\subseteq V$
there exists the smallest stable set ${\rm Hull}\,U$ containing
$U$.
\end{corl}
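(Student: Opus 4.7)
The plan is to form $\mathrm{Hull}\,U$ as the intersection of all stable supersets of $U$ and show this intersection has the required properties. First I would observe that the family
\[
\mathcal{F}_U = \{W \subseteq V : U \subseteq W \text{ and } W \text{ is stable}\}
\]
is nonempty: the whole vertex set $V$ belongs to $\mathcal{F}_U$, since for any $v \in V$ the definition of inflation forces $\Inf V \subseteq V$, while the reverse inclusion $V \subseteq \Inf V$ holds automatically from Definition~\ref{df::inflation}. This is the fact already noted in the sentence preceding the corollary.

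Next I would set
\[
\mathrm{Hull}\,U = \bigcap_{W \in \mathcal{F}_U} W.
\]
Since every $W \in \mathcal{F}_U$ contains $U$, so does $\mathrm{Hull}\,U$. By Lemma~\ref{lemm::intersec}, the intersection of any family of stable sets is stable, hence $\mathrm{Hull}\,U$ is itself stable, and therefore $\mathrm{Hull}\,U \in \mathcal{F}_U$. Minimality is immediate from the construction: if $W'$ is any stable set with $U \subseteq W'$, then $W' \in \mathcal{F}_U$, so $\mathrm{Hull}\,U \subseteq W'$.

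There is essentially no obstacle here; the corollary is a standard closure-operator argument, and all the content is packaged into the preceding lemma. The only point that deserves an explicit word is the nonemptiness of $\mathcal{F}_U$, which is why the authors flagged the stability of $V$ immediately before the statement.
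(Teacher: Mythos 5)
Your proof is correct and follows exactly the route the paper intends: $V$ is stable so the family of stable supersets of $U$ is nonempty, Lemma~\ref{lemm::intersec} makes its intersection stable, and minimality is automatic. This matches the paper, which derives the corollary from the stability of $V$ together with Lemma~\ref{lemm::intersec} and then remarks that ${\rm Hull}\,U$ is the intersection of all stable sets containing $U$.
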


\begin{defn}\label{df::hull}
We say that ${\rm Hull}\,U$ is the {\bfseries hull} of a set $U$.
\end{defn}

It is clear that $ {\rm Hull}\,U $ is the intersection of all
stable sets containing $U$.

Consider connections between the introduced concepts. From
$U\subset\Inf U$ it follows $\Inf^{\infty}U \subset
{\rm Hull}\,U $. The reverse inclusion is true if and only if the
hyperinflation is stable. The following example shows that,
generally speaking, for infinite digraphs this does not hold.

\begin{exmp}\label{exmp1}
\rm Consider a digraph  $G=(V,E)$ such that
$$
V=\{0,1,2,\ldots\},\ \ \  E=\{(n,0)\mid n\ge 1\}\cup\{(n,n+1)\mid n\ge 1\}.
$$
Then $\Inf^n \{1\}=\{1,2,\ldots, n+1\}$, whence $\Inf^{\infty}
\{1\}=V\setminus \{0\}$. On the other hand, ${\rm
Hull}\,\{1\}=\Inf (\Inf^{\infty} \{1\})=V$.
\end{exmp}

\begin{defn}\label{df::locfin}
We call a digraph $G=(V,E)$ {\bfseries locally ${\rm d^-}$-finite}
if ${\rm d^-}(v) <\infty$ for all $v\in V$.
\end{defn}

\begin{prop}\label{prop::locfin}
If a digraph $G =(V, E)$ is locally ${\rm d^-}$-finite, then
$\Inf^{\infty} U = {\rm Hull}\,U$ for every $U \subset V$.
\end{prop}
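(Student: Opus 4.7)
The plan is to prove the nontrivial inclusion $\mathrm{Hull}\,U \subseteq \Inf^{\infty} U$ by showing, under local $\rm d^-$-finiteness, that $\Inf^{\infty} U$ itself is stable. Since $U \subseteq \Inf^{\infty} U$, stability together with the minimality established in Corollary~\ref{corl::stable} will force $\mathrm{Hull}\,U \subseteq \Inf^{\infty} U$. The reverse inclusion $\Inf^{\infty} U \subseteq \mathrm{Hull}\,U$ is noted in the text before Example~\ref{exmp1}, so I do not need to re-derive it.

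To prove stability of $\Inf^{\infty} U$, I would pick any $v \in \Inf(\Inf^{\infty} U)$ and aim to locate it at some finite level $\Inf^{N+1} U$. If $v \in \Inf^{\infty} U$ there is nothing to do, so assume $v \notin \Inf^{\infty} U$; then by Definition~\ref{df::inflation} the set $\mathrm{D^-}(v)$ is nonempty and contained in $\Inf^{\infty} U = \bigcup_{n\ge 0} \Inf^n U$. This is the step where local $\rm d^-$-finiteness enters: $\mathrm{D^-}(v)$ is finite, and the sequence $\Inf^n U$ is increasing in $n$ (because $W \subseteq \Inf W$ for every $W$, applied iteratively), so the finitely many predecessors of $v$ can all be captured inside a single $\Inf^N U$ for some $N$.

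Given such an $N$, Definition~\ref{df::inflation} applied to $\Inf^N U$ places $v$ into $\Inf(\Inf^N U) = \Inf^{N+1} U \subseteq \Inf^{\infty} U$, which contradicts the assumption $v \notin \Inf^{\infty} U$. Thus $\Inf(\Inf^{\infty} U) \subseteq \Inf^{\infty} U$, the reverse inclusion being trivial, so $\Inf^{\infty} U$ is stable.

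I expect the only real subtlety to be the monotonicity observation that $\Inf^n U \subseteq \Inf^{n+1} U$, which is an immediate consequence of $W \subseteq \Inf W$. Everything else is bookkeeping. Example~\ref{exmp1} is instructive as a sanity check: the failure there is precisely that the vertex $0$ has infinitely many predecessors, so no single $\Inf^N\{1\}$ swallows $\mathrm{D^-}(0)$, and the above argument breaks exactly at the point where local $\rm d^-$-finiteness would have been used. This confirms that the finiteness hypothesis is doing the essential work, and no stronger condition is needed.
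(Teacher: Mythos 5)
Your argument is correct and is essentially the paper's own proof: both assume a vertex $v \in \Inf(\Inf^{\infty} U)\setminus \Inf^{\infty} U$, use ${\rm d^-}$-finiteness to trap the finite set ${\rm D^-}(v)$ inside a single $\Inf^N U$, and derive $v \in \Inf^{N+1}U$ for a contradiction. Your added remarks on monotonicity of $\Inf^n U$ and on why stability of $\Inf^{\infty}U$ finishes the job are just explicit versions of steps the paper leaves implicit.
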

\begin{proof} Suppose the contrary. Let $v \in \Inf (\Inf^{\infty} U)
\setminus \Inf^{\infty} U$. As $d^-(v)<\infty$ and $\varnothing \ne
{\rm D^-}(v) \subseteq \Inf^{\infty} U$, we have  ${\rm D^-} (v)
\subset \Inf^n U$ for some $n \ge 0$. But then $v \in \Inf^{n +1} U
\subset \Inf^{\infty} U$ contrary to assumption.
\end{proof}

The statement similar to Lemma~\ref{lemm::entry} is not true for
the hull:

\begin{exmp}\label{exmp2}
\rm Add the vertex $-1$ and the arc $(-1,0)$ to the digraph $G$ from
Example~\ref{exmp1}. Obviously $\{0\}={\rm D^-}(-1)\cap{\rm
Hull}\,\{1\}\ne \{1\}$.
\end{exmp}

Now we introduce the main definition of this article:

\begin{defn}\label{df::decomp}
A {\bfseries decomposition} of a digraph $ G = (V, E) $ is a set of
its subgraphs $G_i = (V_i, E_i)$ $(i\in I)$ such that

(i) $V_i$ are hyperinflations,

(ii) $V$ is a disjoint union of  $V_i$,

(iii) $E_i$ is a restriction of $E$ to $V_i$.
\end{defn}

\begin{rmrk}\label{rmrk::decomp}
\rm Our definition differs from the one given, for example, in
\cite{west}, where a decomposition means a partition of $E(G)$.
\end{rmrk}

{\it Till the end of Section~\ref{sec2} we assume that some locally $
{\rm d^-} $-finite  graph $G(V, E)$ is fixed.}

\begin{thrm}\label{thrm::hypmeet} For any subsets $X,Y\subset V$
$$
\Inf^\infty X\cap \Inf^\infty Y = \Inf^\infty [(X\cap \Inf^\infty Y) \cup (\Inf^\infty
X\cap Y)].
$$
\end{thrm}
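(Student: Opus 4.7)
The plan is to prove the two inclusions separately, with the easy direction first. For $\supseteq$, note that $X\cap\Inf^\infty Y\subseteq\Inf^\infty X\cap\Inf^\infty Y$ and, by symmetry, $\Inf^\infty X\cap Y\subseteq\Inf^\infty X\cap\Inf^\infty Y$, so the bracketed union inside $\Inf^\infty$ on the right-hand side is contained in $\Inf^\infty X\cap\Inf^\infty Y$. By Proposition~\ref{prop::locfin} the hyperinflations $\Inf^\infty X$ and $\Inf^\infty Y$ coincide with the hulls ${\rm Hull}\,X$ and ${\rm Hull}\,Y$, hence are stable; by Lemma~\ref{lemm::intersec} their intersection is stable as well. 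Therefore applying $\Inf^\infty$ to any subset of $\Inf^\infty X\cap\Inf^\infty Y$ keeps us inside it, which gives $\supseteq$.

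For the harder inclusion $\subseteq$, I would set $Z=(X\cap\Inf^\infty Y)\cup(\Inf^\infty X\cap Y)$ and prove by induction on $m+n$ the claim that $\Inf^m X\cap\Inf^n Y\subseteq\Inf^\infty Z$ for all $m,n\ge 0$. The case $m=0$ or $n=0$ is immediate from the definition of $Z$: for instance if $m=0$, then $X\cap\Inf^n Y\subseteq X\cap\Inf^\infty Y\subseteq Z$. For the inductive step with $m,n\ge 1$, I apply Proposition~\ref{prop::infmeet} to $\Inf^{m-1}X$ and $\Inf^{n-1}Y$ in place of $X$ and $Y$ to obtain
$$\Inf^m X\cap\Inf^n Y=(\Inf^{m-1}X\cap\Inf^n Y)\cup(\Inf^m X\cap\Inf^{n-1}Y)\cup\Inf(\Inf^{m-1}X\cap\Inf^{n-1}Y).$$
The first two pieces have strictly smaller index sum and so lie in $\Inf^\infty Z$ by induction. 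For the third, the inductive hypothesis gives $\Inf^{m-1}X\cap\Inf^{n-1}Y\subseteq\Inf^\infty Z$, and monotonicity of $\Inf$ (immediate from Definition~\ref{df::inflation}) together with stability of $\Inf^\infty Z$ yield $\Inf(\Inf^{m-1}X\cap\Inf^{n-1}Y)\subseteq\Inf(\Inf^\infty Z)=\Inf^\infty Z$. Once the claim is proved, each $v\in\Inf^\infty X\cap\Inf^\infty Y$ belongs to some $\Inf^m X\cap\Inf^n Y$ and hence to $\Inf^\infty Z$.

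The main obstacle is propagating the induction past the awkward $\Inf(X\cap Y)$ summand in Proposition~\ref{prop::infmeet}: this step relies on $\Inf^\infty Z$ being stable, which by Example~\ref{exmp1} can fail for general infinite digraphs and is exactly where the locally $d^-$-finite hypothesis enters, via Proposition~\ref{prop::locfin}. The rest is bookkeeping.
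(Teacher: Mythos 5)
Your proof is correct and follows essentially the same strategy as the paper's: the inclusion $\supseteq$ via stability of $\Inf^\infty X\cap\Inf^\infty Y$ (Proposition~\ref{prop::locfin} plus Lemma~\ref{lemm::intersec}), and the inclusion $\subseteq$ by induction on $m+n$, with stability of the right-hand side doing the work at the end of the inductive step. The only cosmetic difference is that you drive the induction through Proposition~\ref{prop::infmeet} applied to $\Inf^{m-1}X$ and $\Inf^{n-1}Y$, whereas the paper argues element-wise via ${\rm D^-}(x)$ for a vertex $x$ with minimal indices; both descents are sound.
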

\begin{proof} The inclusion $\supseteq$ is clear. Indeed,
$$
(X\cap \Inf^\infty Y) \cup (\Inf^\infty X\cap Y) \subset
\Inf^\infty X\cap \Inf^\infty Y;
$$
and the set $\Inf^\infty X\cap \Inf^\infty Y$ is stable because of
the locally ${\rm d^-}$-finiteness of the original graph and
Lemma~\ref{lemm::intersec}.

Let $x\in \Inf^\infty X \cap \Inf^\infty Y$. Then there are
integers $ m, n \ge 0 $ such that $x\in \Inf^m X \cap \Inf^n Y$,
$x\not\in \Inf^{m-1} X \cup \Inf^{n-1} Y$, and
$$
{\rm D^-}(x) \subset \Inf^{m-1} X \cap \Inf^{n-1} Y \subset \Inf^\infty X \cap
\Inf^\infty Y.
$$
Use the induction on $m+n$ to show that
\begin{equation}\label{eq1}
x\in \Inf^\infty [(X\cap \Inf^\infty Y) \cup (\Inf^\infty X\cap Y)].
\end{equation}

If $m=0$ or $n=0$, then $x\in X\cup Y$; hence \eqref{eq1} is hold.

If $m=n=1$, then ${\rm D^-}(x) \subset X \cap Y$. Therefore, $x\in
\Inf (X\cap Y) \subset \Inf^\infty (X \cap Y) \subset \Inf^\infty
[(X\cap \Inf^\infty Y) \cup (\Inf^\infty X\cap Y)]$.

Consider the general case. Since ${\rm D^-}(x) \subset \Inf^{m-1}
X \cap \Inf^{n-1} Y \subset \Inf^\infty X \cap \Inf^\infty Y$, by
the induction assumption
$$
{\rm D^-}(x) \subset \Inf^\infty [(X\cap \Inf^\infty Y) \cup (\Inf^\infty X\cap Y)].
$$
Consequently \eqref{eq1} is true.
\end{proof}

\begin{corl}\label{corl::hypmeet} $\Inf^\infty X\cap \Inf^\infty Y =\varnothing$
if and only if
$$
\Inf^\infty X\cap Y = X\cap \Inf^\infty Y =\varnothing.
$$
\end{corl}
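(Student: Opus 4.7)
The plan is to derive this corollary directly from Theorem~\ref{thrm::hypmeet}, which already computes $\Inf^\infty X\cap \Inf^\infty Y$ explicitly as the hyperinflation of $(X\cap \Inf^\infty Y) \cup (\Inf^\infty X\cap Y)$. So the corollary should be essentially a matter of unpacking that formula in both directions.

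For the forward implication, I would argue that if $\Inf^\infty X\cap \Inf^\infty Y =\varnothing$, then since $X\subseteq \Inf^\infty X$ and $Y\subseteq \Inf^\infty Y$, the sets $X\cap \Inf^\infty Y$ and $\Inf^\infty X\cap Y$ are both contained in $\Inf^\infty X\cap \Inf^\infty Y$ and hence empty.

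For the converse, assume $\Inf^\infty X\cap Y = X\cap \Inf^\infty Y =\varnothing$. Then the bracketed set in Theorem~\ref{thrm::hypmeet} is empty, and since $\Inf\varnothing=\varnothing$ (the inflation adds only vertices whose in-neighborhood is a \emph{nonempty} subset of the base set), we get $\Inf^\infty\varnothing=\varnothing$, so the theorem yields $\Inf^\infty X\cap \Inf^\infty Y=\varnothing$.

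I do not expect any real obstacle here: the only point that needs a small sanity check is the triviality $\Inf\varnothing=\varnothing$, which follows immediately from Definition~\ref{df::inflation} because the condition $\varnothing\ne {\rm D^-}(v)\subseteq \varnothing$ cannot be satisfied. Everything else is set-theoretic bookkeeping on top of Theorem~\ref{thrm::hypmeet}.
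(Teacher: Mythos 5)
Your proof is correct and follows exactly the route the paper intends: the corollary is stated without proof as an immediate consequence of Theorem~\ref{thrm::hypmeet}, and your two directions (the trivial containment $X\cap\Inf^\infty Y,\ \Inf^\infty X\cap Y\subseteq \Inf^\infty X\cap\Inf^\infty Y$ for one implication, and $\Inf^\infty\varnothing=\varnothing$ applied to the theorem's formula for the other) are precisely the unpacking that justifies it. The sanity check that $\Inf\varnothing=\varnothing$ because $\varnothing\ne{\rm D^-}(v)\subseteq\varnothing$ is unsatisfiable is the right observation and closes the argument.
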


As we will see below, the hyperinflations of connected subsets are
of particular interest.

\begin{thrm}\label{thrm::conn}
Let $X,Y\subset U$, $\Inf^\infty X\cap \Inf^\infty Y
\ne\varnothing$, and $X$ is connected. If $\Inf^\infty X\cap Y
=\varnothing$, then $\Inf^\infty X\subset \Inf^\infty Y$.
\end{thrm}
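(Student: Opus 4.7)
The plan is to combine Corollary~\ref{corl::hypmeet}, the connectedness of $X$, and Lemma~\ref{lemm::entry} in three steps.

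First, from $\Inf^\infty X\cap\Inf^\infty Y\ne\varnothing$ together with the hypothesis $\Inf^\infty X\cap Y=\varnothing$, Corollary~\ref{corl::hypmeet} immediately yields $X\cap\Inf^\infty Y\ne\varnothing$, so $X$ contains at least one ``witness'' vertex $x_0\in\Inf^\infty Y$.

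Second, I would upgrade this to $X\subseteq\Inf^\infty Y$. Suppose for contradiction that some $v\in X$ lies outside $\Inf^\infty Y$; in particular $v\ne x_0$. By connectedness of $X$ there is a directed path $v=v_0,v_1,\ldots,v_k=x_0$ inside $G[X]$. Let $j$ be the smallest index with $v_j\in\Inf^\infty Y$; then $j\ge 1$, $v_{j-1}\notin\Inf^\infty Y$ and $v_j\in{\rm D^+}(v_{j-1})\cap\Inf^\infty Y$, so Lemma~\ref{lemm::entry} forces $v_j\in Y$. But $v_j\in X\subseteq\Inf^\infty X$, and this contradicts $\Inf^\infty X\cap Y=\varnothing$.

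Third, local ${\rm d^-}$-finiteness gives, via Proposition~\ref{prop::locfin}, that $\Inf^\infty Y={\rm Hull}\,Y$, which is stable by Definition~\ref{df::stable}. Hence $X\subseteq\Inf^\infty Y$ implies $\Inf X\subseteq\Inf(\Inf^\infty Y)=\Inf^\infty Y$, and a trivial induction gives $\Inf^n X\subseteq\Inf^\infty Y$ for every $n\ge 0$, whence $\Inf^\infty X\subseteq\Inf^\infty Y$ as required.

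The main obstacle is the middle step: one has to convert the purely local entry information provided by Lemma~\ref{lemm::entry} into a global statement about the whole of $X$. Directed connectedness is exactly what is needed here, because it lets us propagate the membership $x_0\in\Inf^\infty Y$ back along a directed path lying inside $X$ and trap the ``first entry'' inside $Y$, where the two hypotheses on $Y$ and $\Inf^\infty X$ collide. Once this is achieved, the passage from $X\subseteq\Inf^\infty Y$ to $\Inf^\infty X\subseteq\Inf^\infty Y$ is purely formal and uses only the stability of a hull.
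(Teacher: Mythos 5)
Your proof is correct, but it is organized differently from the paper's. The paper first disposes of the singleton case as ``obvious'' and then, for $|X|>1$, picks the \emph{smallest} $n$ with $X\cap\Inf^n Y\ne\varnothing$ and derives a contradiction: the entering vertex $x$ satisfies ${\rm D^-}(x)\subset\Inf^{n-1}Y$, yet connectedness gives it an in-neighbour inside $X$, violating minimality --- so for non-singleton connected $X$ the hypotheses are in fact unsatisfiable and the conclusion holds vacuously. You instead prove the genuine inclusion $X\subseteq\Inf^\infty Y$ uniformly in $|X|$, by trapping the first entry into $\Inf^\infty Y$ along a directed path and invoking Lemma~\ref{lemm::entry}, and then you lift this to $\Inf^\infty X\subseteq\Inf^\infty Y$ via monotonicity of $\Inf$ and stability of $\Inf^\infty Y$ (Proposition~\ref{prop::locfin}, which is available because of the standing local ${\rm d^-}$-finiteness assumption). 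Both arguments rest on the same ``first entry'' phenomenon --- indeed Lemma~\ref{lemm::entry} is itself proved by the paper's minimal-$n$ trick --- but your route has the advantage of making the final lifting step explicit rather than hiding it in the word ``obvious'' for singletons, at the cost of silently using the (immediate, but unstated) monotonicity of the operator $\Inf$ from Definition~\ref{df::inflation}. Either version is acceptable.
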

\begin{proof}
Corollary~\ref{corl::hypmeet} implies  $X\cap \Inf^\infty
Y\ne\varnothing$. If $X$ is a singleton, then  the statement is
obvious. Let $|X|>1$. We choose the smallest $n$ such that $X\cap
\Inf^n Y\ne\varnothing$ (it follows from the conditions that $n\ge
1$). Let $x\in X\cap \Inf^n Y$. Since $x\not\in Y$, we have ${\rm
D}^-(x) \subset \Inf^{n-1} Y$. But ${\rm D}^-(x) \cap
X\ne\varnothing$, because $X$ is connected. Therefore, $X\cap
\Inf^{n-1} Y\ne\varnothing$; that contradicts the choice of $n$.
\end{proof}

Theorem~\ref{thrm::conn} allows us to construct (not uniquely)
decompositions of finite graphs whose components are hyperinflations
of connected sets. Describe the process of constructing in detail.

Let $ V $ be a set of vertices of the graph.  We take as $V_1$ an
arbitrary connected subset (for example, a vertex). Suppose we have
taken components $V_1, \dots, V_k$ with disjoint hyperinflations. In
the complement
$$
\displaystyle U= V\setminus \bigsqcup_{i\le k}
\Inf^\infty V_i
$$
choose an arbitrary connected subset $W$. If
$\Inf^\infty W\cap \Inf^\infty V_j \ne\varnothing$ for some $1\le
j\le k$, then $\Inf^\infty V_j \subset \Inf^\infty W$ by
Theorem~\ref{thrm::conn}. In this case replace all  $V_j$ by $ W $
and go on to the choice of the next component. If $\displaystyle
\Inf^\infty W\cap \bigsqcup_{i\le k} \Inf^\infty V_i =\varnothing$,
then we put $V_{k+1}=W$ and continue the process.

\section{Regions and intervals}\label{sec2}

Using the terminology of Computer Science \cite{bi::dragon} we
introduce the following

\begin{defn}\label{df::reg-int}
A subset $ U \subset V $ is said to be a {\bfseries region} if $U =
\Inf^{\infty} \{x \}$ for some $ x\in V $. In this case $x$ is
called a {\bfseries heading} of $ U $. A region is called an
{\bfseries interval} if it is not contained in any other region.
\end{defn}

Generally speaking, the region can have multiple headings. A
sufficient condition for the uniqueness of the heading (this demand
is essential for Computer Science) is obtained directly from
Lemma~\ref{lemm::entry}:

\begin{prop}\label{prop::entry}
Let $U\subset V$ be a region with a heading $x$. If there exists
$y\in V\setminus U$ such that ${\rm D^+}(y)\cap U \ne\varnothing$,
then $x$ is uniquely defined, i.\,e.  ${\rm D^+}(y)\cap U =\{x\}$.
\end{prop}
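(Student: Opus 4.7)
The plan is to reduce the proposition to a direct application of Lemma~\ref{lemm::entry} with the singleton $U' = \{x\}$. By the definition of a region, $U = \Inf^{\infty}\{x\}$, and the hypothesis $y \in V \setminus U$ means exactly that $y \in V \setminus \Inf^{\infty}\{x\}$. So all the assumptions of Lemma~\ref{lemm::entry} are met for $U'=\{x\}$, and its conclusion reads
$$
\mathrm{D}^+(y) \cap U \;=\; \mathrm{D}^+(y) \cap \Inf^{\infty}\{x\} \;\subset\; \{x\}.
$$

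Combined with the standing hypothesis that $\mathrm{D}^+(y) \cap U \neq \varnothing$, this inclusion forces $\mathrm{D}^+(y) \cap U = \{x\}$, which is exactly the displayed equality in the proposition. For the uniqueness part of the claim (the ``i.\,e.'' gloss), note that if $x'$ were any other heading of $U$, then $U = \Inf^{\infty}\{x'\}$ as well, and repeating the same argument with $\{x'\}$ in place of $\{x\}$ would yield $\mathrm{D}^+(y) \cap U = \{x'\}$. Comparing the two equalities gives $\{x\} = \{x'\}$, so $x = x'$.

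There is essentially no obstacle here: Lemma~\ref{lemm::entry} was stated precisely in the form needed, and the singleton specialization turns its conclusion ``${\rm D}^+(y)\cap \Inf^{\infty} U' \subset U'$'' into the desired ``${\rm D}^+(y)\cap U \subset \{x\}$''. The only points to watch are (a) that we invoke the nonemptiness hypothesis to upgrade the inclusion to equality, and (b) that the uniqueness statement requires us to run the same argument for a hypothetical second heading rather than reading it off from the first application alone.
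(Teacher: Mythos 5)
Your proof is correct and matches the paper's intent exactly: the paper states that Proposition~\ref{prop::entry} ``is obtained directly from Lemma~\ref{lemm::entry}'', and your specialization to the singleton $\{x\}$, upgraded to equality by the nonemptiness hypothesis and repeated for a hypothetical second heading, is precisely that argument spelled out.
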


Since a singleton is connected, it follows directly  from
Theorem~\ref{thrm::conn}:

\begin{prop}\label{prop::empty}
If two regions have a nonempty intersection, then one of them
contains the other.
\end{prop}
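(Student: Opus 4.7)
Write the two regions as $R_1=\Inf^\infty\{x\}$ and $R_2=\Inf^\infty\{y\}$ and assume $R_1\cap R_2\neq\varnothing$. The goal is to show that either $R_1\subseteq R_2$ or $R_2\subseteq R_1$. The key observation (and the reason this fits on one line of the paper) is that a singleton is trivially connected, so Theorem~\ref{thrm::conn} applies to $X=\{x\}$ and $Y=\{y\}$.

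The plan is to split on whether $y\in R_1$. In the case $y\notin R_1$, the hypothesis $\Inf^\infty X\cap Y=\{y\}\cap \Inf^\infty\{x\}=\varnothing$ of Theorem~\ref{thrm::conn} is satisfied, so that theorem delivers $R_1=\Inf^\infty\{x\}\subseteq\Inf^\infty\{y\}=R_2$ directly. In the case $y\in R_1$, I would invoke the local $\mathrm{d}^-$--finiteness assumption (in force throughout Section~\ref{sec2}) together with Proposition~\ref{prop::locfin} to conclude that $R_1=\Inf^\infty\{x\}=\mathrm{Hull}\,\{x\}$ is stable; since $R_1$ is a stable set containing $\{y\}$, minimality of the hull yields $R_2=\mathrm{Hull}\,\{y\}\subseteq R_1$.

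There is no real obstacle: the only subtlety is remembering that hyperinflations coincide with hulls in the locally $\mathrm{d}^-$--finite setting, so that ``$y$ lies in $R_1$'' already forces the entire hyperinflation of $\{y\}$ to lie in $R_1$. One could alternatively handle both cases symmetrically by applying Theorem~\ref{thrm::conn} twice (swapping the roles of $x$ and $y$) and using Corollary~\ref{corl::hypmeet} to see that at least one of the two disjointness conditions $\Inf^\infty\{x\}\cap\{y\}=\varnothing$ or $\{x\}\cap\Inf^\infty\{y\}=\varnothing$ is not required --- but the case analysis above is shorter.
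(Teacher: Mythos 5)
Your proof is correct and matches the paper's intent: the paper derives this proposition in one line from Theorem~\ref{thrm::conn} applied to the singletons $\{x\}$ and $\{y\}$, which is exactly your main case, and your handling of the remaining case $y\in\Inf^\infty\{x\}$ via stability of hyperinflations (Proposition~\ref{prop::locfin} plus minimality of the hull) is the detail the paper leaves implicit. No gaps.
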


Now we can state the main result about intervals of finite
digraphs:

\begin{thrm}\label{thrm::decomp}
Every digraph $G = (U, E)$ with the finite set of vertices has the
unique decomposition whose components are intervals.
\end{thrm}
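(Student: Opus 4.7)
The plan is to apply the construction sketched at the end of Section~\ref{sec1}, choosing the connected set $W$ at every step to be a singleton $\{v\}$ drawn from the current complement. Because every $V_i$ that ever appears in the list is then a singleton $\{v_i\}$, each resulting component $\Inf^\infty V_i = \Inf^\infty\{v_i\}$ is automatically a region in the sense of Definition~\ref{df::reg-int}. Termination is not an issue: $V$ is finite, and every iteration either inflates some $V_j$ (when $\Inf^\infty W$ meets an existing component, by Theorem~\ref{thrm::conn}) or appends a new component, so the complement strictly shrinks. Conditions (i)--(iii) of Definition~\ref{df::decomp} hold by the way the construction is set up, which settles existence of a decomposition into \emph{regions}.

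To upgrade these regions to intervals, I would argue as follows. Fix a component $\Inf^\infty V_i = \Inf^\infty\{v_i\}$ and suppose a region $R = \Inf^\infty\{w\}$ satisfies $\Inf^\infty V_i \subseteq R$. The vertex $w$ lies in some component $\Inf^\infty V_j$ of the decomposition, and by Proposition~\ref{prop::locfin} this set coincides with its hull, in particular is stable. Hence $R = \Inf^\infty\{w\} \subseteq \Inf^\infty V_j$, which gives $\Inf^\infty V_i \subseteq R \subseteq \Inf^\infty V_j$. Disjointness of the components forces $i=j$, and therefore $R = \Inf^\infty V_i$. So no region properly contains $\Inf^\infty V_i$, i.e.\ $\Inf^\infty V_i$ is an interval.

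For uniqueness, suppose $\{A_i\}_{i\in I}$ and $\{B_j\}_{j\in J}$ are two decompositions of $G$ into intervals. Pick any $A_i$ together with a heading $a$, so that $A_i = \Inf^\infty\{a\}$. The vertex $a$ lies in a unique block $B_j$; since $B_j$ is stable (again by Proposition~\ref{prop::locfin}), $A_i = \Inf^\infty\{a\} \subseteq B_j$. As $B_j$ is a region and $A_i$ is a maximal region, this inclusion must be an equality, giving $A_i = B_j$. The symmetric argument shows every $B_j$ equals some $A_i$, so the two decompositions coincide.

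The real subtlety, and the place where the finiteness hypothesis genuinely plays a role, is the step in the maximality argument that passes from ``$w$ lies in the component $\Inf^\infty V_j$'' to ``the whole region $\Inf^\infty\{w\}$ lies in $\Inf^\infty V_j$''. This is exactly where one needs the hyperinflation to be stable — i.e.\ to coincide with the hull — which can fail in infinite digraphs as Example~\ref{exmp1} warns. Once this step is secured by Proposition~\ref{prop::locfin}, both existence and uniqueness fall out cleanly; I expect the write-up to be short.
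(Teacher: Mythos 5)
Your proof is correct, but it travels a different road from the paper's. The paper's own proof is a one\--liner: it takes as components the family of \emph{all} intervals of $G$, observes that any two intersecting regions are nested (Proposition~\ref{prop::empty}, itself a special case of Theorem~\ref{thrm::conn}), so that distinct maximal regions are disjoint and, by finiteness, cover $V$; uniqueness is then read off from maximality. You instead build the decomposition greedily from singletons via the procedure after Theorem~\ref{thrm::conn}, and certify that each resulting component $\Inf^{\infty}\{v_i\}$ is an interval by a stability argument: the heading $w$ of any region $R\supseteq\Inf^{\infty}\{v_i\}$ lies in some component, which equals its hull by Proposition~\ref{prop::locfin} and is therefore stable, forcing $R$ into that component. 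This step tacitly uses that $\Inf$ is monotone (so that $\Inf^{\infty}\{w\}\subseteq S$ for any stable $S\ni w$), which is immediate from Definition~\ref{df::inflation} but deserves a word. What each approach buys: the paper's is shorter and needs no construction at all, only the nestedness of regions; yours makes explicit the two points the paper leaves implicit --- that every vertex actually lies in an interval, and that maximality is what pins the decomposition down --- and it isolates precisely where finiteness (via $\Inf^{\infty}=\mathrm{Hull}$) enters, as your closing paragraph correctly observes. Your uniqueness argument is essentially the paper's.
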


\begin{proof} The existence of such a decomposition follows directly from
Pro\-position~\ref{prop::empty}; the uniqueness follows from the
maximality of each interval.
\end{proof}

Consider two extreme cases.

\begin{prop}\label{prop::decomp-one}
All components of an interval decomposition of a digraph are
singletons if and only if ${\rm d^-}(v) = 1$ implies $ {\rm D^-}
(v) = \{(v, v) \} $ for any vertex $v$.
\end{prop}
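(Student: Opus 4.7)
My plan is to reduce the statement to a cleaner equivalent condition on single vertices---namely that $\Inf\{x\}=\{x\}$ for every $x\in V$---and then to unpack this stability condition in terms of input sets.

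The bridge equivalence I would establish is: the (unique) interval decomposition of $G$ consists of singletons if and only if $\Inf\{x\}=\{x\}$ for every $x\in V$. For the forward direction, if the decomposition is by singletons then for every $y\in V$ the singleton $\{y\}$ is a component, hence an interval, hence a region of the form $\Inf^\infty\{x\}$ for some $x$; but then $x\in\Inf^\infty\{x\}=\{y\}$ forces $x=y$, so $\Inf^\infty\{y\}=\{y\}$ and in particular $\Inf\{y\}=\{y\}$. Conversely, if $\Inf\{y\}=\{y\}$ for all $y$, then every singleton is stable, so $\Inf^\infty\{y\}=\{y\}$, hence every region is a singleton and every interval is a singleton; by Theorem~\ref{thrm::decomp} the interval decomposition therefore consists exactly of the singletons $\{y\}$ for $y\in V$.

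It then remains to translate the condition ``$\Inf\{x\}=\{x\}$ for every $x$'' into the stated input-neighbourhood condition. Directly from Definition~\ref{df::inflation},
$$
\Inf\{x\}=\{x\}\cup\{v\in V\mid\varnothing\ne{\rm D^-}(v)\subseteq\{x\}\}=\{x\}\cup\{v\in V\mid{\rm D^-}(v)=\{x\}\},
$$
so the demand $\Inf\{x\}=\{x\}$ for every $x$ is equivalent to: whenever a vertex $v$ satisfies ${\rm D^-}(v)=\{x\}$ for some $x$, one has $v=x$. The hypothesis ``${\rm D^-}(v)=\{x\}$ for some $x$'' is just ``${\rm d^-}(v)=1$'', and the conclusion $v=x$ is the assertion that the unique input of $v$ is itself, i.e.\ ${\rm D^-}(v)=\{v\}$ (the unique input arc being the loop $(v,v)$), which is precisely the condition appearing in the statement.

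There is no real obstacle here. The only place where one must be slightly careful is the forward direction of the bridge, where one identifies the singleton component $\{y\}$ with $\Inf^\infty\{y\}$ by using that every region of the form $\Inf^\infty\{x\}$ contains its heading $x$; everything else is a direct unpacking of the definition of $\Inf$.
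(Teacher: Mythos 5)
Your proof is correct and takes essentially the same route as the paper: both reduce the statement to the condition $\Inf\{v\}=\{v\}$ for every vertex $v$ and then unpack Definition~\ref{df::inflation} to see that a failure of this is exactly a vertex $u$ with ${\rm D^-}(u)=\{x\}$ for some $x\ne u$. You are merely more explicit about the bridge step (which the paper dismisses as ``clear''), and you correctly read the paper's slightly abusive notation ${\rm D^-}(v)=\{(v,v)\}$ as saying that the unique input arc of $v$ is a loop.
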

\begin{proof} Let $G=(V,E)$ be a considered digraph. It is clear that all
its components are singletons if and only if $|\Inf \{v\}|=1$ for
all  $v\in V$. If ${\rm d^-}(v)= 1$ and ${\rm D^-}(v) \ne\{(v,v)\}$,
then there is a vertex $u\ne v$ such that $(u,v)\in E$ and $v\in
\Inf \{u\}$. This implies $|\Inf \{u\}|>1$.

Conversely, suppose that the restriction on ${\rm D^-}$ from the
proposition con\-ditions is hold and $|\Inf \{v\}|>1$ for some $v$. If
$u\in \Inf \{v\}\setminus \{v\}$, then by definition of inflation
${\rm d^-}(v)= 1$; in addition, the arc from ${\rm D^-}(v)$ can not
be a loop.
\end{proof}

Now assume that $G =(V, E) $ is finite and its decomposition
consists of only one component, i.\,e. digraph is an interval. Let
$x$ be a heading of this interval (in general, not the only one),
i.\,e. $G = \Inf^\infty \{x \} = \Inf^n \{x \} $ for some $n> 0$.

\begin{defn}\label{df::jet}
A finite digraph $\mathbf{H}=(W,F)$ with a partition $\displaystyle
W=\bigsqcup_{i=1}^nW_i$ $(n\in \mathbb{N})$ is called  a {\bfseries
jet} if it satisfies the following conditions:
\begin{quotation}
(i) if $i\le j$, then $(W_j\times W_i)\cap F=\varnothing$;

(ii) for each $j\ge 2$ and every vertex $x\in W_j$ there exist
$y_i\in W_i$ $(1\le i<j)$ forming a directed path
$$
y_1\to y_2\to\ldots \to y_{j-1}\to x.
$$
\end{quotation}
\end{defn}

\begin{prop}\label{prop::decomp-all}
Let $\mathbf{H}=(W,F)$ be a jet, $x$ be an element not contained
in $W$, and $V=W\cup \{x\}$. Choose an arbitrary subset
$$
C\subset \bigsqcup_{i>1}\left(W_i\times \{x\}\right)\cup \{(x,x)\}
$$
and put $E=F\cup C\cup (\{x\}\times W_1)$. Then in the digraph
 $G=(V,E)$ the subset $V$ is an interval with a heading $x$.
\end{prop}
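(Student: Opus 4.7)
The plan is to show that $\Inf^\infty\{x\} = V$; once this is established, $V$ is by definition a region with heading $x$, and it is automatically an interval because it is the whole vertex set of $G$, so no region of $G$ can properly contain it.

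The inclusion $\Inf^\infty\{x\} \subseteq V$ is trivial. For the reverse inclusion I would prove, by induction on $i = 0, 1, \ldots, n$, the sharper claim
$$
\{x\} \cup W_1 \cup \cdots \cup W_i \;\subseteq\; \Inf^i\{x\}.
$$
The base case $i = 0$ is tautological. For the inductive step, set $S := \Inf^i\{x\}$ and fix $y \in W_{i+1}$; the task is to verify $\varnothing \ne {\rm D^-}(y) \subseteq S$ in the full graph $G$, so that $y \in \Inf(S) = \Inf^{i+1}\{x\}$. Applying the claim with $i = n$ will give $V = \{x\}\cup W_1 \cup \cdots \cup W_n \subseteq \Inf^n\{x\} \subseteq \Inf^\infty\{x\}$, completing the argument.

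The heart of the proof is the bookkeeping of in-neighbours of $y$ in $G$. Arcs of $F$ terminating at $y \in W_{i+1}$ must originate in $W_1 \cup \cdots \cup W_i$ by jet condition~(i), and hence already lie in $S$ by the inductive hypothesis. No arc of $C$ enters $y$, because every arc of $C$ terminates at $x \ne y$; and no arc of $\{x\} \times W_1$ enters $y$ once $i+1 \ge 2$. The separate sub-case $i+1 = 1$ uses jet condition~(i) with $j = 1$ to rule out arcs inside $W_1$ and combines it with the newly added arc $(x,y)$ to conclude ${\rm D^-}(y) = \{x\} \subseteq S$. Non-emptiness of ${\rm D^-}(y)$ is free: the directed path from jet condition~(ii) supplies a predecessor $y_i \in W_i$ when $i+1 \ge 2$, and $x$ serves directly when $i+1 = 1$.

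I expect no real obstacle here; the argument is essentially combinatorial bookkeeping inside the layered partition. The only point demanding genuine care is that the in-neighbourhood be computed inside $G$ rather than inside $\mathbf{H}$, which is precisely where the restrictions on the added arcs are used: $C$ is confined to edges ending at $x$, and $\{x\}\times W_1$ is confined to edges starting at $x$ and landing in $W_1$. Without these restrictions a vertex $y \in W_{i+1}$ could acquire a spurious predecessor outside $W_1\cup\cdots\cup W_i$, and the inductive step would collapse.
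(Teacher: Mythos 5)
Your argument is correct and follows essentially the same route as the paper's: an induction over the layers $W_1,\dots,W_n$ showing $\{x\}\cup W_1\cup\dots\cup W_i\subseteq \mathrm{Inf}^i\{x\}$, with jet condition (i) confining in-neighbours to lower layers and condition (ii) supplying non-emptiness (the paper phrases this as verifying $Z_j=\mathrm{Inf}\,Z_{j-1}$ for $Z_j=\bigsqcup_{i=0}^j W_i$, but the content is identical). Your explicit bookkeeping of why the added arcs in $C$ and $\{x\}\times W_1$ cannot create spurious predecessors is a point the paper leaves implicit.
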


\begin{proof}
Denote $W_0=\{x\}$, $\displaystyle Z_j=\bigsqcup_{i=0}^j W_i$ and
verify that $Z_j=\Inf Z_{j-1}$. The inclusion $\supseteq$ is
evident. Conversely, suppose that $y\in W_j$ $(j>0)$. By condition
(ii) of Definition~\ref{df::jet} ${\rm D^-}(y)\ne\varnothing$. By
condition (i) $z\in {\rm D^-}(y)$ implies $z\in W_k$ for some
$k<j$. It means that ${\rm D^-}(y)\subset Z_{j-1}$.

It is easy to see that $\Inf \{x\}=W_1$; and thus
$V=\Inf^n\{x\}=\Inf^\infty\{x\}$.
\end{proof}

The converse is true. Moreover:

\begin{prop}\label{prop::decomp-gen}
Let $\{G_j=(V_j,E_j)\mid 1\le j\le N\}$ be an interval
decomposi\-tion of a finite digraph $G=(V,E)$ and $V_j=\Inf^\infty
\{x_j\}$. Then every subgraph $(V_j\setminus\{x_j\},
E_j|_{V_j\setminus\{x_j\}})$ with the partition
$$
V_j\setminus\{x_j\}=\bigsqcup_{i_j=1}^\infty
(\Inf^{i_j}\{x_j\}\setminus\Inf^{i_j-1}\{x_j\})
$$
is a jet.
\end{prop}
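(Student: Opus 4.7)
The plan is to verify the two jet axioms directly against the definition of $\Inf$. Set $W_k = \Inf^k\{x_j\}\setminus\Inf^{k-1}\{x_j\}$ for $k\ge 1$, let $W = V_j\setminus\{x_j\}$, and take $F = E_j|_W$. Since $G$ is finite, the ascending chain $\Inf^k\{x_j\}$ stabilizes at some index $n$ with $\Inf^n\{x_j\}=V_j$; moreover, once some $W_k$ is empty all later $W_{k'}$ are empty too, so $W=\bigsqcup_{k=1}^{n} W_k$ is a genuine finite partition. The key reformulation, coming straight from Definition~\ref{df::inflation}, is that $v\in W_k$ (with $k\ge 1$) exactly when $\varnothing\ne\mathrm{D}^-(v)\subseteq\Inf^{k-1}\{x_j\}$ while $v\notin\Inf^{k-1}\{x_j\}$.

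For condition (i), suppose $(y,z)\in F$ with $y\in W_k$ and $z\in W_l$, $l\le k$. Then $y\in\mathrm{D}^-(z)\subseteq\Inf^{l-1}\{x_j\}\subseteq\Inf^{k-1}\{x_j\}$, contradicting $y\in W_k$. Hence every arc in $F$ goes strictly from an earlier layer to a later one.

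For condition (ii), fix $k\ge 2$ and $x\in W_k$, and build the chain by descending induction, setting $y_k=x$. Given $y_{l+1}\in W_{l+1}$ with $l\ge 1$, the set $\mathrm{D}^-(y_{l+1})$ is non-empty and contained in $\Inf^l\{x_j\}$; if it were contained in $\Inf^{l-1}\{x_j\}$, Definition~\ref{df::inflation} would place $y_{l+1}$ in $\Inf^l\{x_j\}$, contradicting $y_{l+1}\in W_{l+1}$. Thus $\mathrm{D}^-(y_{l+1})\cap W_l\ne\varnothing$; pick any $y_l$ there. Both endpoints lie in $W$, so the arc belongs to $F$. Iterating down to $l=1$ yields the required directed path $y_1\to y_2\to\cdots\to y_{k-1}\to x$.

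No step requires serious work; the entire verification is an unpacking of the inflation operator. The one point deserving attention is the \emph{no skipping} claim in (ii), namely that the predecessors of a layer-$(l+1)$ vertex actually hit the immediately preceding layer $W_l$ rather than jumping further back; this uses the minimality of the index $l+1$, exactly as captured by the reformulation of membership in $W_k$ stated at the outset.
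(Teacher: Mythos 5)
Your proof is correct and follows essentially the same route as the paper: both arguments verify the two jet axioms directly by unpacking the definition of the inflation operator, with condition (i) following from the fact that a vertex of $W_k$ has all its predecessors inside $\Inf^{k-1}\{x_j\}$. The only difference is that the paper dismisses condition (ii) as obvious, whereas you supply the descending-induction construction of the path explicitly (including the correct justification that predecessors of a layer-$(l+1)$ vertex must meet $W_l$ itself), which is a welcome filling-in rather than a new idea.
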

\begin{proof}
Consider an interval $V_k$ and put
$W_i=\Inf^i\{x_k\}\setminus\Inf^{i-1}\{x_k\}$. If $(u,v)\in
(W_j\times W_i)\cap E$ for $1\le i\le j$ then $v\in {\rm D^+}(u)
\not\subset\Inf^{i-1}\{x_k\}$. Hence for $V_j\setminus\{x_j\}$
condition (i) of Definition~\ref{df::jet} is hold. Condition (ii)
is obvious.
\end{proof}

\section{Undirected graphs}\label{sec3}

In this section we assume that $G = (V, E) $ is a finite
undirected connected graph without loops. We will use the
notations ${\rm D}(v)$ and ${\rm d}(v)$ instead of ${\rm
D}^\pm(v)$ and ${\rm d^\pm}(v)$ respectively.

In the undirected case the description of a hyperinflation is
simplified:

\begin{prop}\label{prop::nondir-inf}
$\Inf^\infty U=\Inf U$ for every subset $U\subset V$.
\end{prop}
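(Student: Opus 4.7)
The plan is to show that $\Inf U$ is already stable, i.e.\ $\Inf(\Inf U)=\Inf U$. Once this is established, an easy induction gives $\Inf^{n}U=\Inf U$ for every $n\ge 1$, and therefore
$$
\Inf^{\infty}U=U\cup\bigcup_{n\ge 1}\Inf^{n}U = U\cup\Inf U=\Inf U,
$$
which is the claim.

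To prove stability of $\Inf U$, I would argue by contradiction. Suppose there is a vertex $v\in\Inf(\Inf U)\setminus\Inf U$. Unpacking Definition~\ref{df::inflation}, this means $\varnothing\ne{\rm D}(v)\subseteq\Inf U$, while at the same time $v\notin U$ and it is \emph{not} the case that ${\rm D}(v)\subseteq U$. Consequently there exists a neighbour $u\in{\rm D}(v)$ with $u\in\Inf U\setminus U$. By the definition of inflation applied to $u$ we then have ${\rm D}(u)\subseteq U$.

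Here I would invoke the two standing hypotheses of Section~\ref{sec3}: the graph is undirected and has no loops. Symmetry of $E$ gives $v\in{\rm D}(u)$ (since $u\in{\rm D}(v)$), and absence of loops makes this a genuine membership ($u\ne v$). But then $v\in{\rm D}(u)\subseteq U$, contradicting $v\notin U$. This forces $\Inf(\Inf U)=\Inf U$, completing the argument.

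The only delicate point is making sure the two hypotheses (undirectedness and no loops) are actually used; without the former the symmetry step fails, and without the latter one cannot rule out the degenerate possibility $u=v$. Neither finiteness nor connectedness of $G$ is needed for the statement itself, though they are part of the blanket assumption of the section.
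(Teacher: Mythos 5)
Your proof is correct and rests on the same key observation as the paper's: by symmetry of $E$, any vertex $u\in\Inf U\setminus U$ has all of its neighbours inside $U$, so no new vertex can be adjoined at a second inflation step. The paper packages this as a minimal-index argument on the $n$ with $x\in\Inf^n U$, while you phrase it as stability of $\Inf U$ followed by an easy induction --- a purely cosmetic difference (and note that $u\ne v$ already follows from $u\in\Inf U$, $v\notin\Inf U$, so loop-freeness is not actually needed at that point).
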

\begin{proof}
Let $x\in \Inf^\infty U\setminus U$. Then $x\in \Inf^n U$ for some
$n\ge 1$ and $(y,x)\in E$ for some $y\in \Inf^{n-1} U$. But this
is impossible for $n>1$, otherwise, $x\in {\rm D^+}(y)={\rm
D^-}(y)\subset\Inf^{n-2} U$. Therefore, $n=1$ and $x\in \Inf U$.
\end{proof}

Thus in what follows we may talk about the inflation rather than
the hyperinflation and use the appropriate notations.

Consider some variants of decompositions. We will write them in
the form of
\begin{equation}\label{eq2}
V=\left(\bigsqcup_i \Inf V_i\right)\sqcup U
\end{equation}
where $G[V_i]$ are graphs of some (fixed) class and $U$ is a
subset of singleton components.

First, in the process described after Theorem~\ref{thrm::conn}  we can choose
non-singleton connected subsets as $V_j$ until this is possible. Let components
$V_1,\dots, V_k$ be chosen in such way and in $\displaystyle U= V\setminus
\bigsqcup_{i\le k} \Inf V_i$ there are no any connected components other than vertices.
It means that $U$ is completely disconnected. Moreover ${\rm D}(v)\subset V_i$ for every
$v\in \Inf V_i \setminus V_i$. This proves

\begin{prop}\label{prop::conn-decomp}
Each connected graph $G =(V, E)$ with a finite set of vertices has
the decomposition of form~\eqref{eq2} where $V_i$ are non-singleton
connec\-ted subsets and $U$ is completely disconnected (possibly
empty). Moreover $\displaystyle \left(\bigsqcup_i \Inf V_i \setminus
V_i\right)\cup U$ is completely disconnected subset.
\end{prop}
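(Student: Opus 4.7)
My plan is to apply the iterative construction described just after Theorem~\ref{thrm::conn}, with the added requirement that each chosen $V_i$ be a non-singleton connected subset, and to halt when no such subset survives in the complement. Proposition~\ref{prop::nondir-inf} lets me identify $\Inf^\infty$ with $\Inf$ throughout. The procedure terminates because the monotone quantity $\bigl|\bigsqcup_i\Inf V_i\bigr|$ strictly increases at each step: adding a new component contributes at least the vertices of the chosen $W\subset U$, while a replacement sends $\Inf V_j$ into the strictly larger $\Inf W$, the inclusion being strict since $W\subset U$ is disjoint from $\Inf V_j$. Boundedness by $|V|<\infty$ yields termination, leaving a family $V_1,\dots,V_k$ of non-singleton connected subsets with pairwise disjoint inflations and a residual set $\displaystyle U=V\setminus\bigsqcup_{i\le k}\Inf V_i$.

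Next I would verify that $U$ is completely disconnected. Were $U$ to contain two adjacent vertices $u,v$, the pair $W=\{u,v\}$ would be a non-singleton connected subset of $U$; by Theorem~\ref{thrm::conn} either $\Inf W$ is disjoint from every $\Inf V_i$, or $\Inf V_j\subset \Inf W$ for some $j$. In either case the procedure would not have halted, a contradiction.

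For the \emph{moreover} clause the key identity is that every $v\in\Inf V_i\setminus V_i$ satisfies $\varnothing\ne {\rm D}(v)\subset V_i$, straight from Definition~\ref{df::inflation}. Setting $\displaystyle T=\Bigl(\bigsqcup_i(\Inf V_i\setminus V_i)\Bigr)\cup U$, I would rule out edges inside $T$ by cases on the endpoints of a would-be edge $\{v,w\}$. If $v$ lies in some boundary $\Inf V_i\setminus V_i$, then $w\in {\rm D}(v)\subset V_i$; but $V_i$ is disjoint from $U$, from every $\Inf V_j$ with $j\ne i$, and from the boundary $\Inf V_i\setminus V_i$ itself, so no such $w\in T$ exists. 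The only remaining case, both endpoints in $U$, was excluded in the previous paragraph.

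The step requiring the most care is the termination argument, because it is exactly what guarantees the stopping condition ``no non-singleton connected subset of $U$,'' which in turn forces $U$ to be completely disconnected. Once termination is pinned down, the remainder is routine bookkeeping on top of Proposition~\ref{prop::nondir-inf}, Theorem~\ref{thrm::conn}, and Definition~\ref{df::inflation}.
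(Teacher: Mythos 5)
Your proof is correct and follows essentially the same route as the paper: run the greedy process described after Theorem~\ref{thrm::conn}, choosing non-singleton connected subsets until none remain in the complement, then observe that ${\rm D}(v)\subset V_i$ for every $v\in\Inf V_i\setminus V_i$ to get the \emph{moreover} clause. You merely spell out the termination and disjointness bookkeeping that the paper leaves implicit.
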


Another type of a decomposition is obtained if we choose two-element
connected subsets, i.\,e. arcs, as  $ V_1, \dots, V_k$. Clearly, the
proof will not change, and we get

\begin{corl}\label{corl::match} Each connected graph $G = (V, E)$
with a finite set of vertices has the decomposition of
form~\eqref{eq2} where $V_i$ are arcs\footnote{We identify here an
arc and the connected set of its vertices.}, $U$ is completely
discon\-nected (possibly empty) subset as well as $\displaystyle
\left(\bigsqcup_i \Inf V_i \setminus V_i\right)\cup U$.
\end{corl}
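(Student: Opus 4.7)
The plan is to mirror the argument of Proposition~\ref{prop::conn-decomp} but restrict the class of admissible components at each stage of the constructive procedure that follows Theorem~\ref{thrm::conn}. Starting with the empty collection, at step $k+1$ I would pick any arc $V_{k+1}$ lying in the residue $V\setminus\bigsqcup_{i\le k}\Inf V_i$; if $\Inf V_{k+1}$ meets some previously placed $\Inf V_j$, I invoke the replacement rule sanctioned by Theorem~\ref{thrm::conn} (the new $W=V_{k+1}$ is an arc, hence still a connected subset). Finiteness of $G$ guarantees that the loop terminates.

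Upon termination the residue $U=V\setminus\bigsqcup_i\Inf V_i$ contains no arc at all, for otherwise that arc could have been selected as the next $V_{k+1}$; thus $U$ is completely disconnected. For the second part I would reuse the key observation from Proposition~\ref{prop::conn-decomp}: by Definition~\ref{df::inflation} and Proposition~\ref{prop::nondir-inf}, every vertex $v\in\Inf V_i\setminus V_i$ satisfies ${\rm D}(v)\subseteq V_i$, so $v$ has no neighbour outside $V_i$. Consequently no such $v$ is adjacent to any vertex of $U$, nor to a vertex of some $\Inf V_j$ with $j\ne i$, nor to another vertex of $\Inf V_i\setminus V_i$ (which would itself lie outside $V_i$). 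Combined with $U$ being edge-free, this shows that $\bigl(\bigsqcup_i\Inf V_i\setminus V_i\bigr)\cup U$ carries no edges whatsoever, i.e.\ is completely disconnected.

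I do not anticipate a real obstacle: the authors' remark "the proof will not change" is accurate because the only alteration compared to Proposition~\ref{prop::conn-decomp} is the narrower class of subsets being chosen, and the termination criterion "no more arcs available in $U$" is synonymous with "$U$ is completely disconnected". The one bookkeeping point to keep an eye on is that the replacement rule must preserve the invariant that every $V_i$ is an arc, which holds trivially since the replacement substitutes the newly chosen arc $W$ for each absorbed $V_j$.
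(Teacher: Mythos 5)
Your proof is correct and takes essentially the same route as the paper, which simply reruns the construction following Theorem~\ref{thrm::conn} (exactly as in Proposition~\ref{prop::conn-decomp}) with arcs in place of arbitrary non-singleton connected subsets. Your write-up just makes explicit the details --- termination, the residue containing no arc, and ${\rm D}(v)\subseteq V_i$ for $v\in\Inf V_i\setminus V_i$ --- that the authors compress into ``the proof will not change''.
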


Recall that a {\bf matching} of a graph is a set of pairwise
non-adjacent edges, i.\,e. the arcs that have no common vertices. A
matching is said to be {\bf maximal}, if it is not contained in any
other matching of the graph, and is said to be the {\bf greatest},
if it contains the maximum number of arcs.

Decompositions of Corollary~\ref{corl::match} are characterized in
terms of matchings:

\begin{thrm}\label{thrm::match}
For a finite connected graph $G=(V,E)$ with the decomposi\-tion of form~\eqref{eq2}
satisfying the conditions of Corollary~\ref{corl::match} the arcs $V_1,V_2,\dots$ form
the maximal matching. Conversely, if $\{V_1,V_2,\dots\}$ is a maximal mat\-ching, then
expression~\eqref{eq2} is a decomposition satisfying the conditions of
Corollary~\ref{corl::match}.
\end{thrm}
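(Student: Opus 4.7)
The plan is to handle the two directions separately, leaning throughout on Proposition~\ref{prop::nondir-inf}, which collapses $\Inf^\infty$ to $\Inf$. For a two-vertex set $V_i=\{a_i,b_i\}$, this makes $\Inf V_i\setminus V_i$ transparent: it consists exactly of those vertices whose full neighbourhood is contained in $V_i$.

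For the forward direction, starting from a decomposition satisfying the hypotheses of Corollary~\ref{corl::match}, I would first note that pairwise disjointness of the sets $\Inf V_i$ in particular forces pairwise disjointness of the $V_i$'s, so the arcs form a matching $M$. Maximality is then immediate from the complete disconnectedness of $\bigl(\bigsqcup_i(\Inf V_i\setminus V_i)\bigr)\cup U$: any edge extending $M$ would have both endpoints outside every $V_i$, hence inside this completely disconnected set, contradiction.

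The converse requires more work. Given a maximal matching $\{V_1,V_2,\dots\}$ with $V_i=\{a_i,b_i\}$, set $U:=V\setminus\bigsqcup_i\Inf V_i$. A short case analysis, splitting on whether a hypothetical common vertex $v\in\Inf V_i\cap\Inf V_j$ lies in $V_i$ or in $\Inf V_i\setminus V_i$ and similarly for $j$, shows that the $\Inf V_i$'s are disjoint; it uses only $V_i\cap V_j=\varnothing$ from the matching property together with $\varnothing\ne{\rm D}(v)\subseteq V_i$ for $v\in\Inf V_i\setminus V_i$. Then $U$ is completely disconnected by maximality: an edge with both endpoints in $U$ would enlarge $M$. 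A similar case analysis over the placements of the endpoints establishes complete disconnectedness of $\bigl(\bigsqcup_i(\Inf V_i\setminus V_i)\bigr)\cup U$.

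The delicate point, which I expect to be the main obstacle, is the formal requirement of Definition~\ref{df::decomp} that each component of~\eqref{eq2} be a hyperinflation. For the $\Inf V_i$'s this is automatic, but for the singletons $\{u\}$ with $u\in U$ one needs $\Inf\{u\}=\{u\}$; equivalently, no vertex $v\ne u$ is a leaf whose only neighbour is $u$. Such a $v$, were it to exist, could not lie in any $V_i$ (which would force $u\in V_i$, contradicting $u\in U$) nor in any $\Inf V_i\setminus V_i$ (which would force $\{u\}\subseteq V_i$, again impossible). Hence $v\in U$; but then $\{u,v\}$ is an edge inside $U$, contradicting the complete disconnectedness just established. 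This closes the argument.
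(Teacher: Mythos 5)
Your proof is correct and follows essentially the same route as the paper's: a matching/maximality argument for the forward direction, and for the converse a case analysis on where a putative common vertex of $\Inf V_i$ and $\Inf V_j$ sits (the paper phrases this via Corollary~\ref{corl::hypmeet}, reducing to your case of a vertex in $(\Inf V_i\setminus V_i)\cap V_j$). You are in fact more thorough than the paper, which silently omits both the complete-disconnectedness checks in the converse direction and the verification that the singletons of $U$ are stable, i.e.\ genuine hyperinflation components.
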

\begin{proof}
By construction different $V_i$ and $V_j$ have no common vertices, there\-fore
$\{V_1,V_2,\dots\}$ is a matching. The complete disconnectedness of
$$
\displaystyle \left(\bigsqcup_i \Inf V_i \setminus V_i\right)\cup U
$$
implies its maximality.

Conversely, let $\{V_1,V_2,\dots\}$ be a maximal matching and
$V_i=(x_i,y_i)$. Suppose that $\Inf V_i\cap \Inf
V_j\ne\varnothing$ ($i\ne j$). According to
Corollary~\ref{corl::hypmeet} we can assume that $\Inf V_i\cap
V_j\ne\varnothing$. Since $V_i\cap V_j=\varnothing$, either $x_j$
or $y_j$ is contained in $\Inf V_i\setminus V_i$.

If, for example,  $x_j\in\Inf V_i\setminus V_i$, then $y_j\in {\rm
D^-}(x_j)\subset V_i$; that is impossible. Similarly $y_j\in \Inf
V_i\setminus V_i$ implies $x_j\in V_i$. Hence $\Inf V_i\cap \Inf
V_j=\varnothing$ for all $i\ne j$.
\end{proof}

We deal with another variant of a decomposition in the next
section.

\section{Forbidden subgraphs}\label{sec4}

In this section we apply a decomposition to the well-known forbidden
sub\-graphs problem. This direction began with Tur\'an's work
\cite{tur} about the number of edges in the graph that does not
contain any clique of given order. A good overview is given in
\cite{bol}. Among the recent articles we mention also \cite{ehss}.

In general, the problem statement is as follows:

Let $H$ be a fixed finite graph ({\bf forbidden graph}). Find the
least upper bound ${\rm ex} (p, H)$ for the number of arcs of finite
graphs with $p$ vertices, not containing $H$ as a subgraph (such
graphs are called {\bf $H$-free}).

We use the number-theoretic functions ``floor'' $\lfloor
x\rfloor$, ``ceiling'' $\lceil x\rceil$, and fractional part
$\{x\}$. Recall that
$$\lfloor x\rfloor=x-\{x\}; \qquad \lceil x\rceil=x+\{-x\}.$$

For $K_3$ (the complete graph of order 3) ${\rm ex}(p,K_3)=
\lfloor\frac{p^2}{4}\rfloor$ \cite[Theorem I.2]{bol}. We obtain a similar
evaluation for the graph $H$ of the form

\begin{picture}(200,80)(-40,0)
\put(111,60){$\bullet$} \put(166,60){$\bullet$} \put(138,30){$\bullet$}
\put(111,0){$\bullet$} \put(166,0){$\bullet$}

\put(140,36){$\line(-1,1){25}$} \put(144,35){$\line(1,1){25}$}
\put(115,6){$\line(1,1){25}$} \put(167,5.5){$\line(-1,1){25}$}
\put(114,7){$\line(0,1){51}$} \put(169,7){$\line(0,1){51}$}
\end{picture}

\noindent Hereinafter $H$ denotes just this graph.
Following~\cite{west} we call it a ``bowtie''.

A sequence of different vertices $U=$ $\{v_1,\ldots,v_n\}\subset V$
of $G(V,E)$ is said to be a {\bf path} if $(v_i,v_{i+1})\in E$ for
all $1\le i<n$.

If $U_1, U_2$ are two disjoint subsets of vertices, then ${\rm
d}(U_1, U_2)$ denotes the number of arcs connecting vertices of $U_1
$ with those of $U_2$.

The {\bf volume} of $G=(V,E)$ is a pair ${\rm vol}\, G=(p,q)$
where $p$ is a number of vertices and $q$ is a number of arcs in
$G$.

Our main result in this section is the following:
\begin{thrm}\label{thrm::H}
${\rm ex}(p,H)= \lfloor\frac{p^2}{4}\rfloor+1$ for $p> 4$.
\end{thrm}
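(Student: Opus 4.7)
\medskip

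\noindent\textbf{Proof sketch.} My plan is to establish the matching lower and upper bounds separately. For the lower bound, I take $G^{*}$ to be $K_{\lfloor p/2\rfloor,\lceil p/2\rceil}$ with one additional edge $e$ inserted inside the larger color class. This graph has $\lfloor p^2/4\rfloor+1$ edges. Since the underlying bipartite graph is triangle-free, every triangle of $G^{*}$ uses the edge $e$, so any two distinct triangles of $G^{*}$ intersect in the two endpoints of $e$. Hence no two triangles of $G^{*}$ share exactly one vertex, i.e., $H\not\subseteq G^{*}$, and so $\mathrm{ex}(p,H)\geq\lfloor p^2/4\rfloor+1$.

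For the upper bound I would argue by contradiction. Assume $G=(V,E)$ is $H$-free with $|V|=p>4$ and $|E|\geq\lfloor p^2/4\rfloor+2$. Since $|E|$ exceeds the Mantel bound (the $K_{3}$ case ${\rm ex}(p,K_{3})=\lfloor p^2/4\rfloor$ quoted just before the statement), $G$ contains a triangle. The $H$-free condition translates at once into the statement that any two distinct triangles of $G$ meet in either $0$ or exactly $2$ vertices. The core claim is then that \emph{all triangles of $G$ share a single common edge} $e_{0}$; granting this, $G-e_{0}$ is triangle-free and Mantel gives $|E|-1\leq\lfloor p^2/4\rfloor$, contradicting the assumption.

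To prove the core claim, I would first show that in a $K_{4}$-free graph in which every two distinct triangles share an edge, they must all share the \emph{same} edge --- otherwise tracing two distinct ``pivot edges'' through a third triangle forces a copy of $K_{4}$. It therefore remains to rule out two configurations: (A) two vertex-disjoint triangles $T_{1},T_{2}$ in $G$, and (B) a $K_{4}$-subgraph of $G$. In each case I would apply the decomposition machinery of Section~\ref{sec3}, taking $V_{1}$ to be the ``core'' ($T_{1}\cup T_{2}$ in (A) and the $K_{4}$ in (B)), and exploit $H$-freeness to restrict how external vertices attach to the core: in (B) each external vertex has at most one neighbour in the $K_{4}$; in (A) any external vertex contributing a triangle to $T_{i}$ must do so along one fixed edge of $T_{i}$, and no external vertex contributes triangles to both $T_{1}$ and $T_{2}$ simultaneously. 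A direct count combined with induction on $p$ applied to the $H$-free subgraph on the remaining $p-4$, resp. $p-6$, vertices then yields $|E|\leq\lfloor p^2/4\rfloor+1$ in each case, contradicting the hypothesis; the finitely many base cases $p\in\{5,6,7,8\}$ are checked by hand.

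The main obstacle will be case (A), where the resulting arithmetic matches the target bound $\lfloor p^2/4\rfloor+1$ with no slack. Closing the induction there requires using the full strength of the $H$-free condition, in particular the uniqueness of the ``extending edge'' of each core triangle through which every triangle glued to it must pass, together with the bound of at most one edge per vertex in the bipartite $(T_{1},T_{2})$-cut.
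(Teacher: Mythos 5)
Your proposal is correct in outline, but it proves the theorem by a genuinely different and considerably more elementary route than the paper. The paper's upper bound is a long induction built on the decomposition machinery of Sections 1--3: it decomposes $G$ into inflations of paths, starting from a premaximal path, and controls the edge count through Lemmas~\ref{lemm::path2}--\ref{lemm::change} (how many neighbours an outside vertex can have on a path, and the delicate case analysis for $l=3,4,6$), finishing with a separate induction on the number of $K_4$-subgraphs. Your argument bypasses all of this by exploiting the structure of triangles directly: $H$-freeness forces any two triangles to meet in $0$ or $2$ vertices, and your ``pivot edge'' claim is sound --- if $T_1=\{a,b,c\}$ and $T_2=\{a,b,d\}$ share $ab$ and a third triangle $T_3$ shares a different edge of $T_1$, then $T_3$ is forced onto $\{a,b,c,d\}$ and all six edges appear, giving $K_4$. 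So in the $K_4$-free, no-two-disjoint-triangles case all triangles pass through one edge $e_0$, and Mantel applied to $G-e_0$ gives $q\le\lfloor p^2/4\rfloor+1$ at once. The two exceptional configurations also close: for a $K_4$ every outside vertex has at most one neighbour in it (this is exactly the paper's final step), and for two disjoint triangles $T_1,T_2$ the bounds $\mathrm{d}(T_1,T_2)\le 3$ and $\mathrm{d}(w,T_1\cup T_2)\le 3$ for outside $w$ (no $2{+}2$ attachment, no third neighbour in one triangle by $K_4$-freeness) give $q\le 6+3+3(p-6)+\bigl(\tfrac{(p-6)^2}{4}+1\bigr)=\tfrac{p^2}{4}+1$ exactly, so integrality finishes it. Two small corrections to your own assessment: you do not need any of the Section~\ref{sec3} decomposition machinery in cases (A) and (B), only the counts just listed; and case (A) does not in fact require the ``unique extending edge'' refinement --- the crude degree bounds already hit $\tfrac{p^2}{4}+1$, which suffices since you are contradicting $q\ge\lfloor p^2/4\rfloor+2$. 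What remains to be written out carefully is the bookkeeping of the induction (do case (B) before case (A) so that $G[W]$ is $K_4$-free; state the inductive claim for all, possibly disconnected, $H$-free graphs not isomorphic to $K_4$, as in Proposition~\ref{prop::pereformulirovka} and Lemma~\ref{lemm::connect}; check $p\le 8$ by hand), but these are routine. Your approach buys a much shorter proof of Theorem~\ref{thrm::H}; the paper's approach buys a demonstration of its decomposition technique, which is the article's actual subject.
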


To build an $H$-free graph with exactly
$\lfloor\frac{p^2}{4}\rfloor+1$  arcs, it is sufficient to consider
$K_{\lfloor\frac{p}{2}\rfloor, \lceil\frac{p}{2}\rceil}$ (the
complete bipartite graph with partite sets containing
$\lfloor\frac{p}{2}\rfloor$ and $\lceil\frac{p}{2}\rceil$ vertices )
and to draw one more arc in one of its partite sets.

The example of the graph $K_4$ shows that for $p = 4$ the
statement of Theorem~\ref{thrm::H} is violated.

The remaining part of the article will be devoted to the proof of
the proposition which implies, taking into account the facts
mentioned above, the theorem.

\begin{prop}\label{prop::pereformulirovka}
Let  $G$ be an $H$-free graph which is not isomorphic to $K_4$ and ${\rm
vol}\,G=(p,q)$. Then $q\leq \frac{p^2}{4}+1$.
\end{prop}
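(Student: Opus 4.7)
The plan is to prove Proposition~\ref{prop::pereformulirovka} by induction on $p$, with base case $p=5$ verified by direct inspection. Here $\lfloor 25/4\rfloor+1=7$ is attained by $K_{2,3}$ plus one extra edge inside the part of size three, and any graph on $5$ vertices with at least $8$ edges has complement containing at most $2$ edges; in each of the two possible shapes of the complement (two edges sharing a vertex, or two vertex-disjoint edges) one locates two vertex-disjoint edges inside $D(v)$ for some $v$, producing a bowtie.

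For the inductive step assume $p\ge 6$. If $G$ has a vertex $v$ with $d(v)\le\lfloor p/2\rfloor$, then $G-v$ is $H$-free on $p-1\ge 5$ vertices (hence not isomorphic to $K_4$), and the induction hypothesis combined with the elementary identity $\lfloor(p-1)^2/4\rfloor+\lfloor p/2\rfloor=\lfloor p^2/4\rfloor$ gives $e(G)\le\lfloor p^2/4\rfloor+1$. Otherwise $\delta(G)\ge\lfloor p/2\rfloor+1$, and I would exploit $H$-freeness pointwise: for each vertex $v$, the subgraph induced on $D(v)$ contains no matching of size $2$ (two disjoint edges $ab$, $cd$ in $D(v)$ together with $v$ form two triangles meeting only in $v$, i.e.\ a bowtie). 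Thus $G[D(v)]$ is either an independent set, a star $K_{1,r}$ padded with isolated vertices, or a $K_3$ padded with isolated vertices. The high minimum degree forbids any isolated vertex inside $D(v)$ (such a $z$ would satisfy $d(z)\le p-d(v)<\delta(G)$) and also the pure $K_3$ possibility ($d(v)=3$ is too small), so $G[D(v)]$ is either independent or a pure star $K_{1,d(v)-1}$.

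If every $G[D(v)]$ is independent, then $G$ is triangle-free and the bound $\mathrm{ex}(p,K_3)=\lfloor p^2/4\rfloor$ quoted in the paper finishes. Otherwise some $v$ has $G[D(v)]\cong K_{1,d(v)-1}$ with apex $a$ and leaf set $L$; every leaf $\ell$ satisfies $D(\ell)\cap(\{v\}\cup D(v))=\{v,a\}$, so $d(\ell)\le p-d(v)+1$. Combined with $d(\ell)\ge\lfloor p/2\rfloor+1$ this yields $\lfloor p/2\rfloor+1\le\lceil p/2\rceil$, impossible for $p$ even and forcing $G$ to be $((p+1)/2)$-regular when $p$ is odd. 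To rule out the remaining regular case I would propagate the star structure: a degree count shows each leaf is adjacent to every vertex of $V\setminus(\{v\}\cup D(v))$ while the apex $a$ exhausts its degree inside $\{v\}\cup D(v)$; each $z\in V\setminus(\{v\}\cup D(v))$ is then forced to pick up one more neighbor inside $V\setminus(\{v\}\cup D(v))$, producing an edge $z_1z_2$. For any leaf $\ell$, the two vertex-disjoint edges $va$ and $z_1z_2$ both lie in $D(\ell)$, so $\ell$ is the centre of a bowtie --- contradicting $H$-freeness.

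The main obstacle is closing this final regular configuration: while the structural dichotomy on $G[D(v)]$ is easy once one recognises that "no matching of size $2$" characterises stars and triangles, propagating the apex/leaf structure at $v$ out to $V\setminus(\{v\}\cup D(v))$ and recognising that the resulting disjoint edge must create the bowtie is the delicate step, and is precisely where the specific shape of $H$ (two triangles sharing exactly one vertex) is actually used.
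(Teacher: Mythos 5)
Your argument is correct, and it is a genuinely different proof from the one in the paper. The paper proves the proposition by first treating $H$- and $K_4$-free graphs via the decomposition machinery of Sections 1--3: it builds a decomposition whose first component is a premaximal path, controls ${\rm d}(x,U)$ for a path $U$ through Lemmas~\ref{lemm::path2}--\ref{lemm::change} and Corollary~\ref{corl::almost_max_path}, runs a somewhat delicate induction with separate cases $n=1$ and $n\ge 2$, and only afterwards removes the $K_4$-freeness hypothesis by a second induction on the number of $K_4$-subgraphs. You instead use the classical minimum-degree reduction: delete a vertex of degree at most $\lfloor p/2\rfloor$ and apply the identity $\lfloor (p-1)^2/4\rfloor+\lfloor p/2\rfloor=\lfloor p^2/4\rfloor$, and when $\delta(G)\ge\lfloor p/2\rfloor+1$ exploit the key local fact that bowtie-freeness forbids a $2$-matching in any neighbourhood, so each $G[{\rm D}(v)]$ is a star or a triangle plus isolated vertices; the degree hypothesis then kills isolated vertices and triangles, forces pure stars, yields an immediate contradiction for even $p$, forces $\frac{p+1}{2}$-regularity for odd $p$, and the propagated apex/leaf structure manufactures a bowtie centred at a leaf. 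I checked the endgame: the apex indeed exhausts its degree inside $\{v\}\cup{\rm D}(v)$, each outside vertex is adjacent to all $\frac{p-1}{2}$ leaves and so needs exactly one outside neighbour (here $|Z|=\frac{p-3}{2}\ge 2$ requires $p\ge 7$, which holds for odd $p\ge 6$), and the disjoint edges $va$ and $z_1z_2$ in ${\rm D}(\ell)$ give the bowtie. Your route is shorter and more elementary, completely bypasses the inflation/decomposition apparatus (which is the paper's advertised point), needs no separate treatment of $K_4$-subgraphs (the exclusion of $K_4$ only matters at $p=4$, which your induction never visits), and as a bonus shows every extremal graph has a vertex of degree at most $\lfloor p/2\rfloor$; the paper's route, by contrast, is designed to showcase its decomposition technique. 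Two cosmetic points: the "every $G[{\rm D}(v)]$ independent" branch is actually vacuous once isolated vertices in neighbourhoods are excluded (though handling it via ${\rm ex}(p,K_3)$ is harmless), and since the proposition as stated covers all $p$, you should add the one-line verification for $p\le 4$.
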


First, make sure that it is enough to prove this statement for
connected graphs.

\begin{lemm}\label{lemm::connect}
Let $G$ be a disconnected graph,  ${\rm vol}\,G=(p,q)$ and $G_j$
$(j=1,\dots,n$, $n\geq 2)$ be all of its connected components with
the volumes ${\rm vol}\,G_j=(p_j,q_j)$. If $q_j\leq
\frac{p_j^2}{4}+1$ for all $j$, then $q\leq \frac{p^2}{4}+1$.
\end{lemm}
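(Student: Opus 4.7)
The plan is to add up the component-wise bounds and reduce the conclusion to a transparent inequality in the $p_j$. Substituting $q_j\le p_j^2/4 + 1$ and using $p = \sum_j p_j$ gives
\[
q \le \frac{1}{4}\sum_j p_j^2 + n, \qquad \frac{p^2}{4} + 1 = \frac{1}{4}\sum_j p_j^2 + \frac{1}{2}\sum_{i<j} p_i p_j + 1,
\]
so the statement boils down to $\sum_{i<j} p_i p_j \ge 2(n-1)$.

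First I would note that this naive reduction is not valid in general: for $n=2$ and $p_1=p_2=1$ the left side is $1$ while the right side is $2$. The fix is to use a strictly sharper bound than the hypothesis on isolated components --- a connected component on one vertex carries no arcs, i.\,e.\ $q_j = 0$, rather than $5/4$. So I would split the index set into the $m$ indices with $p_j \ge 2$ and the $s = n - m$ singletons and write
\[
q \le \frac{1}{4}\sum_{p_j\ge 2} p_j^2 + m,
\]
the trivial subcase $m = 0$ (all components singletons) giving $q = 0 \le p^2/4 + 1$ outright.

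For $m\ge 1$, set $P = \sum_{p_j\ge 2} p_j$, so $p = P + s$. The key algebraic estimate is
\[
P^2 - \sum_{p_j\ge 2} p_j^2 \;=\; 2\!\!\sum_{\substack{i<j\\ p_i,p_j\ge 2}}\!\! p_i p_j \;\ge\; 8\binom{m}{2} \;=\; 4m(m-1),
\]
coming from $p_i p_j \ge 4$ for any two non-singleton components. Combining this with the obviously nonnegative cross term $2Ps + s^2$ from expanding $(P+s)^2$, the target inequality reduces to
\[
(m-1)^2 + \tfrac{1}{4}(2Ps + s^2) \ge 0,
\]
which is immediate.

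I do not anticipate any real obstacle: the only subtle point is remembering to use $q_j = 0$ for singletons rather than the hypothesised bound, which is precisely what makes the algebra close up. An alternative packaging would be induction on $n$, merging two components $G_i, G_j$ with $p_i p_j \ge 2$ into a single ``block'' satisfying $q_i + q_j \le (p_i + p_j)^2/4 + 1$ at each step and handling the all-singleton configuration separately; both routes use the same underlying inequality $p_i p_j \ge 4$ between non-singleton sizes.
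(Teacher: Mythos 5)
Your proof is correct. It differs from the paper's in packaging rather than in substance: the paper reduces at once to the case $n=2$ (implicitly merging components two at a time), disposes of the configuration $p_1=p_2=1$ by noting $q_1=q_2=0$, and otherwise uses only $p_1p_2\ge 2$ to get $\frac{(p_1+p_2)^2}{4}+1\ge\bigl(\frac{p_1^2}{4}+1\bigr)+\bigl(\frac{p_2^2}{4}+1\bigr)$ --- this is exactly the ``alternative packaging'' you sketch in your last paragraph. Your main route instead sums all the component bounds in one shot, which forces you to absorb $m$ surplus units of $+1$ simultaneously and hence to invoke the stronger inequality $p_ip_j\ge 4$ between non-singleton sizes, arriving at $(m-1)^2+\frac{1}{4}(2Ps+s^2)\ge 0$. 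Both arguments turn on the same two observations (singleton components contribute $q_j=0$, not the hypothesised $\frac{5}{4}$, and cross terms $p_ip_j$ pay for the extra constants), and your explicit remark that the naive summation $\sum_{i<j}p_ip_j\ge 2(n-1)$ fails for singletons is exactly the subtlety the paper's case split addresses. The pairwise-merging version is slightly leaner since it only ever needs one cross term at a time; your global version has the minor virtue of making the quantitative slack $(m-1)^2+\frac{1}{4}(2Ps+s^2)$ visible.
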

\begin{proof} It is clear that we can consider only the case $n=2$.
If $p_1=p_2=1$ then $q_1=q_2=0$, and the lemma is true. Otherwise, $p_1p_2\geq 2$ implies
$\frac{(p_1+p_2)^2}{4}+1\geq
\left(\frac{p_1^2}{4}+1\right)+\left(\frac{p_2^2}{4}+1\right)$.
\end{proof}

In what follows  we  assume that $G$ is a finite and connected
graph and do not indicate that specially.

Prove some auxiliary statements.

\begin{lemm}\label{lemm::path2}
Let $G=(V,E)$ be an $H$-free graph, $U=\{v_1,\ldots,v_l\}$ be a
path in $G$ and $x\in V\setminus U$. Then ${\rm d}(x,U)\leq
\lceil\frac{l}{2}\rceil+1$. If the equality holds, then

1) there exist vertices $v_j, v_{j+1}$ adjacent to $x$;

2) either $v_1$ or  $v_l$ is adjacent to $x$, if $l$ is even;

3) both $v_1$ and $v_l$ are adjacent to $x$, if $l$ is odd.
\end{lemm}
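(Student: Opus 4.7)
The plan is to reduce the question to bookkeeping about independent sets in the path graph $P_l$ on the vertex set $\{1,\dots,l\}$ (with edges between consecutive integers). Set $S=\{i\in\{1,\dots,l\}:(x,v_i)\in E\}$, so that ${\rm d}(x,U)=|S|$. I will bound $|S|$ by showing that removing at most one well-chosen element from $S$ yields an independent set in $P_l$; since the independence number of $P_l$ is $\lceil l/2\rceil$, this gives $|S|\le\lceil l/2\rceil+1$. The equality conditions will be read off from the structure of the maximum independent sets of $P_l$.

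The crucial consequence of $H$-freeness is that two disjoint consecutive pairs in $S$ are forbidden. Indeed, if $\{i,i+1\}\subseteq S$ and $\{j,j+1\}\subseteq S$ with $|i-j|\ge 2$, then the five distinct vertices $x,v_i,v_{i+1},v_j,v_{j+1}$, together with the path edges $v_iv_{i+1}$, $v_jv_{j+1}$ and the four $x$-edges, span a bowtie. Therefore at most one maximal run of consecutive integers in $S$ can have length $\ge 2$, and that run has length at most $3$ (a run of length $4$ already contains the two disjoint pairs $\{a,a+1\}$ and $\{a+2,a+3\}$).

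The bound now follows by cases on this run $R$. If no such $R$ exists, $S$ is itself independent in $P_l$ and $|S|\le\lceil l/2\rceil$. Otherwise $R=\{a,a+1\}$ or $R=\{a,a+1,a+2\}$; set $T=S\setminus\{a+1\}$. I claim $T$ is independent in $P_l$: the surviving elements of $R$ are non-adjacent in $P_l$ (differ by $2$ in the length-$3$ case); the maximality of $R$ forces $a-1,a+|R|\notin S$; and by the previous paragraph the ``rest'' $S\setminus R$ has no consecutive pair. Hence $|S|=|T|+1\le\lceil l/2\rceil+1$.

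For the equality clauses, $|S|=\lceil l/2\rceil+1$ rules out the first case, so a consecutive pair exists in $S$ (this gives~1), and the $T$ above must be a \emph{maximum} independent set of $P_l$. Part~2 uses the observation that when $l$ is even every maximum independent set of $P_l$ meets $\{1,l\}$, since otherwise it would lie in the sub-path on $\{2,\dots,l-1\}$, whose independence number is only $l/2-1$. Part~3 uses the fact that when $l$ is odd the \emph{unique} maximum independent set of $P_l$ is $\{1,3,\dots,l\}$ (a short induction, peeling off an endpoint), which contains both $1$ and $l$. In either case the required endpoints belong to $T\subseteq S$. The only place in the argument that requires slight care is verifying that the single deletion of $a+1$ really breaks every consecutive pair of $S$; this is exactly where the ``run length $\le 3$'' bound from Step~1 is used.
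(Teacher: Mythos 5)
Your proof is correct. Both you and the paper extract the same consequence of $H$-freeness --- $x$ cannot be joined to all four endpoints of two vertex-disjoint edges $v_iv_{i+1}$, $v_jv_{j+1}$ of the path --- but the counting is organized differently. The paper partitions the path into $\lfloor l/2\rfloor$ disjoint consecutive pairs and applies the pigeonhole principle (for even $l$; the odd case is then reduced to the even one by dropping an endpoint), and it obtains the equality clauses 2) and 3) by re-applying the bound to the subpaths $\{v_2,\dots,v_{l-1}\}$ and $\{v_1,\dots,v_{2m}\}$. You instead encode the neighbourhood of $x$ as a subset $S\subseteq\{1,\dots,l\}$, show that $S$ has at most one run of length $\ge 2$ and that this run has length at most $3$, delete a single element to land inside an independent set of $P_l$, and read the equality clauses off the classification of maximum independent sets of a path (every one meets $\{1,l\}$ for even $l$; the unique one is $\{1,3,\dots,l\}$ for odd $l$). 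Your route yields strictly more structural information in the equality case (for odd $l$ it essentially pins down $S$ completely), at the cost of invoking, and briefly proving, the facts about independent sets of $P_l$; the paper's pigeonhole-plus-subpath argument is shorter and self-contained. Both arguments are elementary and complete.
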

\begin{proof}
Let $l$ be even, $l=2m$.  Suppose that ${\rm d}(x,U)\ge
\lceil\frac{l}{2}\rceil +2=m+2$. By the pigeonhole principle among
$m$ pairs $(v_1,v_2), (v_3, v_4),\ldots,$ $(v_{2m-1},v_{2m})$ there
exist $(v_i,v_{i+1}),(v_j,v_{j+1})$ such that four arcs outgoing
from $x$ end in them. At the same time $i\ne j+1, j\ne i+1$, hence
$x,v_i,v_{i+1},v_j,v_{j+1}$ form a subgraph isomorphic to $H$.

If there are no $v_j, v_{j+1}$ adjacent to $x$, then ${\rm
d}(x,U)\leq m<\lceil\frac{l}{2}\rceil +1$. This implies  1) for
even $l$.

Since for the path $U'=\{v_2, \dots, v_{l-1}\}$ consisting of $l-2$ vertices ${\rm d}(x,
U')\leq \lceil\frac{l-2}{2}\rceil+1=\lceil\frac{l}{2}\rceil$, it follows from ${\rm
d}(x,U)=\lceil\frac{l}{2}\rceil+1$ that either $v_1$ or $v_l$ is adjacent to $x$;
therefore 2) is hold.

Let $l=2m+1$. As it was proved above, for $U'=\{v_1,\ldots,
v_{2m}\}$ the inequality ${\rm d}(x,U')\leq m+1$ holds. Hence ${\rm
d}(x,U)\leq m+2=\lceil\frac{l}{2}\rceil+1$; and the equality is
possible only in the case when the vertex $u_{2m+1}$ is adjacent to
$x$ and there exist vertices $v_j, v_{j+1}$ adjacent to $x$. To
complete the proof of 3) it suffices to consider the path
$\{v_2,\ldots, v_{2m+1}\}$ and to deduce that $v_1$ is adjacent to
$x$.
\end{proof}

A path $\{v_1, \dots, v_{l}\}$ in the graph $G=(V,E)$ is called
\textbf{premaximal} if there exists a vertex $v_{l+1}\in V\setminus
U$ such that the path $\{v_1, \dots, v_{l}, v_{l+1}\}$ is maximal,
i.\,e. has the maximum possible length.

\begin{lemm}\label{lemm::Inflation}
Let $G=(V,E)$ be not completely disconnected and $U$ be a
premaximal path. Then ${\rm Inf}\, U\ne U$.
\end{lemm}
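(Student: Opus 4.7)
The plan is to exhibit an explicit witness to $\mathrm{Inf}\,U\setminus U\ne\varnothing$, namely the vertex $v_{l+1}$ provided by the premaximality of $U$. Write $U=\{v_1,\dots,v_l\}$, and let $v_{l+1}\in V\setminus U$ be such that $\{v_1,\dots,v_l,v_{l+1}\}$ is a maximal path in $G$. Since $v_{l+1}$ is adjacent to $v_l\in U$, we already know $D(v_{l+1})\ne\varnothing$, so the only thing to verify is the inclusion $D(v_{l+1})\subseteq U$; that will place $v_{l+1}$ in $\mathrm{Inf}\,U\setminus U$ and finish the lemma.

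The key step is to argue this inclusion by contradiction. Suppose there were some $w\in D(v_{l+1})\setminus U$. Because the section assumes $G$ is loopless, $w\ne v_{l+1}$, and since $w\notin U$ one has $w\notin\{v_1,\dots,v_{l+1}\}$. Then $\{v_1,\dots,v_l,v_{l+1},w\}$ is a sequence of pairwise distinct vertices in which every consecutive pair is adjacent (the new adjacency $(v_{l+1},w)$ being the hypothesis on $w$), i.e.\ a path of length $l+2$. This contradicts the maximality of $\{v_1,\dots,v_{l+1}\}$, so no such $w$ exists and $D(v_{l+1})\subseteq U$.

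The hypothesis that $G$ is not completely disconnected is used only to guarantee that premaximal paths actually exist (otherwise the statement would be vacuous for length-zero paths, since a single vertex could have empty inflation). The absence of loops enters in a single place, to ensure that the extending vertex $w$ does not coincide with $v_{l+1}$. I do not expect any real obstacle: once the extension $v_{l+1}$ is named, the argument is a one-line contradiction with the definition of a maximal path, and the conclusion $v_{l+1}\in\mathrm{Inf}\,U$ follows immediately from Definition~\ref{df::inflation} together with the undirected identification $D^-=D$.
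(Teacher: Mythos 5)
Your proof is correct and follows the same route as the paper's: both exhibit the extending vertex $v_{l+1}$ of the maximal path, observe that maximality forces ${\rm D}(v_{l+1})\subseteq U$ (nonempty since $v_{l+1}$ is adjacent to $v_l$), and conclude $v_{l+1}\in{\rm Inf}\,U\setminus U$. You merely spell out the contradiction step and the role of looplessness that the paper leaves implicit.
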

\begin{proof}
Let $U=\{v_1,\ldots,v_l\}$ and $U'=\{v_1,\ldots,v_{l+1}\}$ be a maximal path in  $G$.
Then ${\rm D}(v_{l+1})\subset U$. Therefore $v_{l+1}\in \Inf U\ne U$.
\end{proof}

\begin{lemm}\label{lemm::Almost max_path}
Let $U=\{v_1, \dots , v_l\}$ be a premaximal path in the $H$-free graph $G=(V,E)$. If
$x\in V\setminus U$ and ${\rm d}(x, U)=\lceil\frac{l}{2}\rceil+1$, then for every vertex
$y\in V\setminus U$ such that $y\ne x$, inequality ${\rm d}(y, U)\leq
\lceil\frac{l}{2}\rceil-1$ holds.
\end{lemm}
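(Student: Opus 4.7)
The plan is to argue by contradiction: suppose there exists $y\in V\setminus U$ with $y\ne x$ and $d(y,U)\ge\lceil l/2\rceil$, and exhibit a path in $G$ on $l+2$ vertices, contradicting the premaximality of $U$ (whose maximum extension $\{v_1,\dots,v_{l+1}\}$ has only $l+1$ vertices).

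First I would apply Lemma~\ref{lemm::path2} to $x$: the hypothesis $d(x,U)=\lceil l/2\rceil+1$ yields a consecutive pair $v_i,v_{i+1}\in D(x)$. If $x\ne v_{l+1}$, the sequence $v_1,\dots,v_i,x,v_{i+1},\dots,v_l,v_{l+1}$ is a path of $l+2$ distinct vertices, a contradiction; hence $x=v_{l+1}$. The same insertion argument applied to $y$ (now appending $v_{l+1}=x$ at the end) shows that $D(y)\cap U$ cannot contain a consecutive pair $v_j,v_{j+1}$, for otherwise $v_1,\dots,v_j,y,v_{j+1},\dots,v_l,x$ would be a path on $l+2$ vertices. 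Therefore $D(y)\cap U$ is an independent set of the $U$-path of cardinality at least $\lceil l/2\rceil$, which equals the maximum independence number of a path on $l$ vertices.

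Next I observe that $v_1\notin D(y)$: being an endpoint of the maximum path $\{v_1,\dots,v_{l+1}\}$, $v_1$ has all its neighbours in this set, so adjacency of $v_1$ and $y$ would force $y=v_{l+1}=x$, a contradiction. For odd $l$ the unique maximum independent set of the $U$-path is $\{v_1,v_3,\dots,v_l\}$, which contains $v_1$ -- already a contradiction. For even $l=2m$, the condition $v_1\notin D(y)$ forces $D(y)\cap U$ inside the subpath $\{v_2,\dots,v_l\}$ on $2m-1$ vertices, which has odd vertex count; its unique independent set of maximum size $m$ is $\{v_2,v_4,\dots,v_l\}$, so in particular $v_l\in D(y)$. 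Taking the consecutive pair $(v_k,v_{k+1})\subseteq D(x)$ from Lemma~\ref{lemm::path2}, the sequence $v_1,\dots,v_k,x,v_{k+1},\dots,v_l,y$ is a path of $l+2$ distinct vertices (the final edge $v_ly$ being provided by $v_l\in D(y)$), the final contradiction.

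The main obstacle I expect is the structural step identifying $x$ with the extension vertex $v_{l+1}$, together with the independent-set analysis pinning down $D(y)\cap U$ tightly enough to always permit an appending operation; once these are in place, the contradictory path is produced by a single insert-and-append.
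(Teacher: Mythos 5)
Your proof is correct, and while it shares the paper's overall frame (argue by contradiction and lengthen the maximal path by inserting a vertex between a consecutive adjacent pair supplied by part 1) of Lemma~\ref{lemm::path2}), the second half is genuinely different. The paper applies Lemma~\ref{lemm::path2} a second time --- to $y$ and the inner subpath $\{v_2,\dots,v_{l-1}\}$, after first ruling out adjacency of $y$ to $v_1$ and $v_l$ --- and then invokes parts 2), 3) of that lemma to place $x$ adjacent to $v_1$, producing the long path $x,v_1,\dots,v_i,y,v_{i+1},\dots,v_l$. You instead make the structural observation that $x$ must coincide with the extension vertex $v_{l+1}$ (which is not in the paper at all), use the resulting edge $v_lx$ to show that ${\rm D}(y)\cap U$ meets no consecutive pair, and then treat ${\rm D}(y)\cap U$ as a maximum independent set of the index path, exploiting the uniqueness of maximum independent sets in odd paths to force either $v_1\in {\rm D}(y)$ (odd $l$, immediate contradiction with the endpoint property of the maximum path) or $v_l\in {\rm D}(y)$ (even $l$, after which the insert-$x$-append-$y$ path gives the contradiction). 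Amusingly, your even-case conclusion $v_l\in{\rm D}(y)$ is exactly the negation of an intermediate step the paper proves directly. What your route buys is economy of input --- only part 1) of Lemma~\ref{lemm::path2} is needed, the rest being elementary facts about independent sets in paths --- at the cost of a parity case split; the paper's route is shorter on the page because it delegates more work to the full strength of Lemma~\ref{lemm::path2}. Both arguments are sound.
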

\begin{proof}
Assume the contrary, let ${\rm d}(y, U)\geq
\lceil\frac{l}{2}\rceil$. Note that by Lemma~\ref{lemm::path2} there
is a pair of vertices $v_j, v_{j+1}$ adjacent to $x$. Then $y$ can
not be adjacent to $v_1$, otherwise the path $\{y, v_1,\dots ,
v_j,x,v_{j+1}, \dots, v_l\}$ is longer than maximal. Similarly $y$
is not adjacent to $v_l$. Hence for the path $U'=\{v_2, \dots,
v_{l-1}\}$ the inequality ${\rm d}(y, U')\geq
\lceil\frac{l}{2}\rceil=\lceil\frac{l-2}{2}\rceil+1$ holds.
Therefore by Lemma~\ref{lemm::path2} we can find vertices  $v_i,
v_{i+1}$ adjacent to $y$. Without loss of generality, we can also
suppose, in view of statements 2), 3) of Lemma~\ref{lemm::path2},
that $x$ is adjacent to $v_1$. Then the path $\{x,v_1, \dots, v_i,
y, v_{i+1}, \dots v_l\}$ is longer than maximal path, a
contradiction.
\end{proof}

\begin{corl}\label{corl::almost_max_path}
Let $U=\{v_1, \dots , v_l\}$ be a premaximal path in the $H$-free
graph $G=(V,E)$ and $|{\rm Inf}\,U|=p$. If $p-l\geq 2$, then ${\rm
d}({\rm Inf}\,U\setminus U, U)\leq
(p-l)\left\lceil\frac{l}{2}\right\rceil.$
\end{corl}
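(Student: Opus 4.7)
The plan is to use Lemma~\ref{lemm::path2} and Lemma~\ref{lemm::Almost max_path} directly, splitting on whether the maximal per-vertex bound $\lceil l/2\rceil+1$ is attained by some vertex of $\Inf U\setminus U$.

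Set $m=\lceil l/2\rceil$ and denote the vertices of $\Inf U\setminus U$ by $x_1,\dots,x_{p-l}$. By Lemma~\ref{lemm::path2} each of them satisfies ${\rm d}(x_i,U)\le m+1$. First, suppose that ${\rm d}(x_i,U)\le m$ for every $i$. Then
\[
{\rm d}(\Inf U\setminus U,U)=\sum_{i=1}^{p-l}{\rm d}(x_i,U)\le (p-l)m,
\]
which is the required bound.

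Otherwise some $x_i$, say $x_1$, attains ${\rm d}(x_1,U)=m+1$. By Lemma~\ref{lemm::Almost max_path}, every other $x_j$ $(j\ge 2)$ satisfies ${\rm d}(x_j,U)\le m-1$. Summing gives
\[
{\rm d}(\Inf U\setminus U,U)\le (m+1)+(p-l-1)(m-1)=(p-l)m-(p-l-2).
\]
Here is exactly where the hypothesis $p-l\ge 2$ is used: it guarantees $(p-l-2)\ge 0$, so the right-hand side is at most $(p-l)m$, as claimed.

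There is no real obstacle beyond correct bookkeeping; the only thing to notice is that the hypothesis $p-l\ge 2$ is precisely what is needed so that the saving $(m-1)$ on each ``non-extremal'' vertex absorbs the single extra edge contributed by the possibly extremal vertex $x_1$. If $p-l=1$ the argument would fail (the unique vertex could still have degree $m+1$ into $U$), which is why the hypothesis appears in the statement.
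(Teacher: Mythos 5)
Your proof is correct and follows exactly the route the paper intends: the result is stated as an immediate corollary of Lemma~\ref{lemm::path2} and Lemma~\ref{lemm::Almost max\_path}, and your case split (no vertex extremal vs.\ one extremal vertex forcing all others down to $\lceil l/2\rceil-1$) together with the observation that $p-l\ge 2$ makes the saving absorb the extra edge is precisely the intended bookkeeping. Nothing to add.
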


Let $U=\{v_1, \dots , v_l\}$ be a path. We put $U^{(j)}=U\setminus
\{v_j\}$.

\begin{lemm}\label{lemm::change}
Let $l\geq 3$ and $U=\{v_1, \dots , v_l\}$ be a path in the $H$-
and $K_4$-free graph $G=(V,E)$. Let $x\in V\setminus U$ and ${\rm
d}(x, U)=\lceil\frac{l}{2}\rceil+1$. If the number of arcs of the
subgraph $G[U]$ does not exceed $\frac{l^2}{4}+1$, then there
exists a vertex $v_j$ for which ${\rm d}(v_j,
U^{(j)}\cup\{x\})\leq \lceil\frac{l}{2}\rceil$.
\end{lemm}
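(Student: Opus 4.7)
I would open with the summation identity
\[
\sum_{j=1}^{l} d\bigl(v_j,\, U^{(j)}\cup\{x\}\bigr) \;=\; 2\,e(G[U]) + d(x,U),
\]
whose right-hand side is bounded above by $l^2/2 + \lceil l/2\rceil + 3$ thanks to the two hypotheses of the lemma. Arguing by contradiction, I assume the conclusion fails, so every summand is $\geq \lceil l/2\rceil + 1$ and the sum is $\geq l(\lceil l/2\rceil + 1)$. Rearranging yields $(l-1)(\lceil l/2\rceil + 1) \leq l^2/2 + 2$.

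A short parity check shows this inequality is violated for every odd $l\geq 5$ and for every even $l\geq 8$, giving the contradiction immediately. Only the three values $l\in\{3,4,6\}$ remain; in each of them equality must hold throughout the estimate, which pins down $e(G[U]) = \lfloor l^2/4\rfloor + 1$ and the degree sequence of $G[U\cup\{x\}]$ completely.

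For $l=3$ this means $N(x)\cap U = U$ and $G[U] = K_3$, so $G[U\cup\{x\}] = K_4$, contradicting the hypothesis. For $l=4$ the forced degree sequence produces a unique vertex $v_{j_0}$ of degree four in $G[U\cup\{x\}]$ whose neighbourhood induces a $4$-cycle; taking two opposite edges of that cycle together with $v_{j_0}$ gives two triangles meeting only at $v_{j_0}$, i.e.\ a bowtie. For $l=6$ the extremal $G[U\cup\{x\}]$ is $4$-regular on $7$ vertices and contains the triangle $T=\{x,v_i,v_{i+1}\}$ supplied by Lemma~\ref{lemm::path2}(1); a double-count of edges between $T$ and its complement shows that at least two external vertices are adjacent to two vertices of $T$, and $K_4$- together with $H$-freeness force all such external vertices to hang on the same edge of $T$. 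This rigidifies the structure enough to produce a secondary triangle on external vertices which, combined with an appropriate triangle through $T$, forms a bowtie.

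The main obstacle is the casework for $l=4$ and $l=6$: the rigidity imposed by equality in the summation must be translated into an explicit forbidden subgraph, and although the structure is heavily constrained, making the bowtie visible requires careful unpacking of the adjacency pattern around the triangle from Lemma~\ref{lemm::path2}(1).
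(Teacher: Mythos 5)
Your proposal is correct and follows the same skeleton as the paper's proof: a degree-versus-edge count under the contradiction hypothesis that reduces everything to $l\in\{3,4,6\}$, followed by case analysis. (The paper sums degrees over all of $U\cup\{x\}$ and compares with the edge bound $\frac{l^2}{4}+\lceil\frac{l}{2}\rceil+2$; your identity $\sum_j {\rm d}(v_j,U^{(j)}\cup\{x\})=2e(G[U])+{\rm d}(x,U)$ is the same count restricted to $U$, and yields the identical list of surviving values.) Where you genuinely diverge is in how the three cases are killed. The paper enumerates the possible non-neighbourhoods $\overline{A}=U\setminus N(x)$ along the path and exhibits a bowtie or a violated degree in each subcase --- about eight subcases for $l=6$. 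You instead squeeze the extremal rigidity out of the count: for $l=3$ and $l=6$ the two bounds meet exactly, and for $l=4$ the integrality of $2e(G[U])+{\rm d}(x,U)$ forces $e(G[U])=\lfloor l^2/4\rfloor+1$ and the degree multiset $(3,3,3,4)$. (Your phrase ``equality must hold throughout'' is literally false for $l=4$, where the sum is $13$ against a lower bound of $12$; it is the parity argument, which you should state, that does the pinning.) This buys a real simplification at $l=6$: once $G[U\cup\{x\}]$ is known to be $4$-regular on $7$ vertices, one can even bypass your triangle-and-double-count argument entirely by passing to the complement, which is $2$-regular and hence $C_7$ or $C_3\sqcup C_4$; both complements visibly contain a bowtie. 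I checked that your $l=4$ step is sound as stated (the degree count forces $x$ to be adjacent to both degree-$2$ vertices of the diamond $G[U]$ and to exactly one hub, whose neighbourhood is then a $4$-cycle, i.e.\ two disjoint edges, i.e.\ a bowtie), and that the $l=6$ outline can be completed as you describe; but both completions require the unpacking you acknowledge, so in a final write-up the adjacency bookkeeping should be made explicit.
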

\begin{proof}  Assume the contrary: for every vertex $v_j$
the inequality
$$
{\rm d}(v_j, U^{(j)}\cup\{x\})\geq
\left\lceil\frac{l}{2}\right\rceil+1
$$
holds. By
hypothesis $G[U\cup\{x\}]$ contains no more than
$\frac{l^2}{4}+\lceil\frac{l}{2}\rceil+2$ arcs. The assumption implies:
$$
\frac{1}{2}(l+1)\left(\left\lceil\frac{l}{2}\right\rceil+1\right)
\le \frac{l^2}{4}+\left\lceil\frac{l}{2}\right\rceil+2.
$$
Hence for even $l$ we have $l\le 6$ and for odd $l$ we have $l\le 3$. Therefore it is
sufficient to consider the cases $l=3,4,6$.

Let $A$ be a set of vertices of  $U$ which are adjacent to $x$, and
$\overline{A}=U\setminus A$.

If  $l=3$, then ${\rm d}(x, U)=\lceil\frac{3}{2}\rceil+1=3$; hence
$A=U=\{v_1, v_2, v_3\}$. Then the assumption implies that the
graph $G$ is isomorphic to $K_4$; this contradicts the condition.

Let $l=4$. Without loss of generality, we can assume that
$\overline{A}=\{v_3\}$ or $\overline{A}=\{v_4\}$. Then $v_1, v_2\in
A$. Hence $v_2$ and $v_4$ are not adjacent, otherwise, $G$ contains
a ``bowtie''. By assumption ${\rm d}(v_4, U^{(4)}\cup\{x\})\geq 3$,
i.\,e. $v_4$ is adjacent to $v_1$ and $x$, and
$\overline{A}=\{v_3\}$. Then $(v_1, v_3)\not \in E$, otherwise the
vertices of $U\cup\{x\}$ form a ``bowtie''. Therefore, ${\rm
d}(v_3, U^{(3)}\cup\{x\})< 3$ contrary to assumption.

Let $l=6$. Then  $\mid \overline{A} \mid=2$. Without loss of
generality, by Lemma~\ref{lemm::path2} we can suppose that $x$ is
adjacent to $v_1$.

First, assume that  $v_6\not\in A$.  Since $G$ is $H$-free, it
follows that either $A=\{v_1, v_2, v_3, v_5\}$ or $A=\{v_1, v_3, v_4,
v_5\}$. According to the assumption  ${\rm d}(v_6, U^{(6)})= {\rm
d}(v_6, U^{(6)}\cup\{x\})=4$. Therefore, as well as for $x$, there
are two variants: the set of vertices adjacent to  $v_6$ equals
either $\{v_1, v_2, v_3, v_5\}$ or $\{v_1, v_3, v_4, v_5\}$.
Checking straightforwardly four cases, we get a contradiction. So
$v_6\in A$.

Note that the cases $\overline{A}=\{v_3, v_4\}$,
$\overline{A}=\{v_3, v_6\}$ and $\overline{A}=\{v_5, v_6\}$ are
impossible, since $G$ is $H$-free. We will obtain the contradiction
in every of the remaining variants:

Let $\overline{A}=\{v_2, v_3\}$. Note that $(v_2, v_4), (v_3,
v_5)\not\in E$, since $G$ is $H$-free. Then assumption implies that
$v_2$ and $v_3$ are adjacent to $v_6$, hence the vertices $\{v_2,
v_3, v_6, v_5, x\}$  form a ``bowtie''. The case
$\overline{A}=\{v_4, v_5\}$ is similar.

Let $\overline{A}=\{v_2, v_4\}$. Note that $(v_3, v_5), (v_1,
v_3)\not \in E$. The assumption implies that $v_3$  is adjacent to
$v_6$, hence, $(v_4, v_6), (v_4, v_2)\not \in E$ and ${\rm d}(v_4,
U^{(4)}\cup\{x\})<4$ contrary to the assumption. The case
$\overline{A}=\{v_3, v_5\}$ is similar.

Let $\overline{A}=\{v_2, v_5\}$. Then $(v_4, v_6), (v_1, v_6)\not
\in E$ and the assumption implies that $v_2$ and $v_3$ are adjacent
to $v_6$. Therefore, the vertices $\{v_2, v_3, v_6, v_4, x\}$ form a
``bowtie''.
\end{proof}

\begin{prop}\label{prop::diam_2}
Let  $G$ be a connected graph,  ${\rm vol}\,G=(p, q)$, and the
length of the maximal path in $G$ does not exceed 2. Then  $q\leq
\frac{p^2}{4}+1$.
\end{prop}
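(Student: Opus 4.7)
The plan is first to classify the connected graphs in which no path on four vertices appears, and then to verify the inequality on the resulting short list of possibilities.

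I would begin by picking a vertex $v\in V$ of maximum degree and showing that $V=\{v\}\cup{\rm D}(v)$. Indeed, any vertex at distance $\ge 3$ from $v$ would produce, via a shortest $v$-path, a path of length $3$, contradicting the hypothesis. Moreover, if some $w$ lies at distance exactly $2$ from $v$ through a neighbour $u\in{\rm D}(v)$, and $v$ has a second neighbour $u'\ne u$, then $w-u-v-u'$ is a forbidden path of length $3$. The only way to avoid this is ${\rm d}(v)\le 1$, in which case connectedness forces $G\in\{K_1,K_2\}$ and no distance-$2$ vertex exists.

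Next I would analyse edges inside ${\rm D}(v)$. If ${\rm d}(v)\ge 3$ and two neighbours $u_i,u_j$ of $v$ are adjacent, then any third neighbour $u_k$ yields the forbidden path $u_k-v-u_i-u_j$. Hence either ${\rm D}(v)$ is independent, in which case $G$ is the star $K_{1,p-1}$, or $|{\rm D}(v)|\le 2$, which forces $p\le 3$ and leaves only the graphs $K_1,K_2,K_{1,2},K_3$.

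Finally I would check $q\le p^2/4+1$ on these cases. For the star $K_{1,p-1}$ one has $q=p-1$, and the inequality rearranges to $(p-2)^2\ge -4$, which is obvious. For $K_3$ we have $p=q=3$ and $3\le 9/4+1=13/4$; the smaller graphs are immediate. The only nontrivial content of the argument is the structural classification above; once it is in hand, the quantitative estimate is trivial algebra, with $K_3$ being the tightest case.
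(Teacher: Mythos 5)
Your proof is correct and takes essentially the same route as the paper: both reduce the statement to the observation that a connected graph whose longest path has at most three vertices must be $K_p$ with $p\le 3$ or a star $K_{1,p-1}$, and then verify $q\le \frac{p^2}{4}+1$ on that short list. The only difference is that the paper merely asserts this classification as an "immediate check," while you supply a correct argument for it.
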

\begin{proof} Note that the graph $G$ satisfying the condition is
isomorphic to $K_p$ for $ p \leq $ 3 or to $K_{1, p-1}$. The
inequality can be proved by immediate check.
\end{proof}

Now we are ready to prove
Proposition~\ref{prop::pereformulirovka}.

\noindent \begin{proof} First, let $G=(V,E)$ be a $H$- and
$K_4$-free graph, ${\rm vol}\, G=(p,q)$, and the length of the
maximal path in $G$ is greater than 2. Construct a decomposition
choosing  the pathes without self-intersections as $V_i$ and
taking a premaximal path as the first component $V_1$.

Let ${\rm vol}\,G[ V_i]=(l_i,m_i)$, ${\rm vol}\,G[ \Inf
V_i]=(p_i,q_i)$ ($i=1, \dots, n$). Then  $p_1>l_1$ by
Lemma~\ref{lemm::Inflation} and $l_1\geq 3$ by assumption.

Use an induction on $p$.

We consider separately the case $n = 1$.  We omit the indices in the
notations, thus, $p=p_1$, $q=q_1$, $l=l_1$, $m=m_1$.

Let $p>l+1$.  By the induction assumption $m\leq
\lfloor\frac{l^2}{4}\rfloor+1$, and by
Corollary~\ref{corl::almost_max_path}
$q-m\leq(p-l)\lceil\frac{l}{2}\rceil$. Then
\begin{eqnarray} \label{eq::Thm_H_3}
\frac{p^2}{4}+1 -q & = & \frac{p^2}{4}+1 -m - (q-m)\nonumber \\
& \ge & \frac{p^2}{4}+1 - \left\lfloor\frac{l^2}{4}\right\rfloor
-1-(p-l)\left\lceil\frac{l}{2}\right\rceil  \\
& = & \frac{p^2}{4}
- \frac{l^2}{4} + \left\{\frac{l^2}{4}\right\} -(p-l)\frac{l}{2}
-(p-l)\left\{-\frac{l}{2}\right\} \nonumber\\
& = & \frac{(p-l)^2}{4} + \left\{\frac{l^2}{4}\right\}
-(p-l)\left\{\frac{l}{2}\right\}=\left(\frac{p-l}{2}-
\left\{\frac{l}{2}\right\}\right)^2\geq 0. \nonumber
\end{eqnarray}

Let $p=l+1$. Note that there is a vertex $x\in {\rm Inf}\, V_1$ such that ${\rm d}(x,
{\rm Inf}\, V_1\setminus\{x\})\leq \lceil\frac{l}{2}\rceil$. Indeed,  $q-m\leq
\lceil\frac{l}{2}\rceil+1$ by Lemma~\ref{lemm::path2}. Therefore the only vertex of the
set ${\rm Inf}\, V_1\setminus V_1$ can be taken as $x$ or, by Lemma~\ref{lemm::change},
$x$ can be chosen in $V_1$.

Let ${\rm vol}\,G[{\rm Inf}\, V_1\setminus\{x\}]=(l,s)$. Then
$q-s\leq \lceil\frac{l}{2}\rceil$. Again by the induction
assumption $s\leq\lfloor\frac{l^2}{4}\rfloor+1$. So we have
\begin{eqnarray} \label{eq::Thm_H_4}
\frac{p^2}{4}+1 -q & = & \frac{(l+1)^2}{4}+1 -s - (q-s) \nonumber \\
& \ge & \frac{(l+1)^2}{4}+1 -
\left\lfloor\frac{l^2}{4}\right\rfloor-1
-\left\lceil\frac{l}{2}\right\rceil.
\end{eqnarray}
Note that expression~\eqref{eq::Thm_H_4} is obtained
from~\eqref{eq::Thm_H_3} by substitution $p=l+1$, hence, it is
nonnegative.

Thus for $n = 1$ the statement is proved.

Let $n\geq 2$. Put $$p'=p-p_1=\sum_{i=2}^n p_i,\quad
q'=q-q_1=\sum_{i=2}^n q_i+\sum_{1\leq i< j\leq n}{\rm d}(V_i,
V_j).$$

Applying the induction assumption to the subgraph
$G[V_1\sqcup\displaystyle\bigsqcup_{j=2}^n {\rm Inf}\,V_j]$, we have $m_1+q'\leq
\left\lfloor\frac{(l_1+p')^2}{4} \right\rfloor +1$. Moreover
$q_1-m_1\leq(p_1-l_1)\left(\lceil\frac{l_1}{2}\rceil+1\right)$ by
Lemma~\ref{lemm::path2}. Then
\begin{eqnarray} \label{eq::Thm_H_5}
\frac{p^2}{4}+1 -q & =   & \frac{(p_1+p')^2}{4}+1 -(m_1+q') - (q_1-m_1)  \nonumber \\
                   &\geq & \frac{(p_1+p')^2}{4}+1 - \frac{(l_1+p')^2}{4}+
                   \left\{\frac{(l_1+p')^2}{4}\right\} -1 -(q_1-m_1)\nonumber\\
                   &\geq & \frac{p_1^2}{4}+\frac{(p_1-l_1)p'}{2}  -  \frac{l_1^2}{4}+
                   \left\{\frac{(l_1+p')^2}{4}\right\} \nonumber \\
                   &   & \phantom{\frac{p_1^2}{4}+\frac{(p_1-l_1)p'}{2}}-
                   (p_1-l_1)\left( \frac{l_1}{2}+\left\{-\frac{l_1}{2}\right\}+
                   1\right) \nonumber \\
                   & = &\!\! \frac{(p_1-l_1)^2}{4}+ (p_1-l_1)\left(\frac{p'}{2}-
                   \left\{ -\frac{l_1}{2} \right\}-1\right)+
                   \left\{ \frac{(l_1+p')^2}{4}\right\}\!\!. \ \ \
\end{eqnarray}
Since $\left\{-\frac{l_1}{2}\right\}+ 1\leq \frac{3}{2}$ it
follows that~\eqref{eq::Thm_H_5} is nonnegative for $p'\geq 3$.

Note that $p'\ne 1$. Otherwise $n=2$ and the only vertex of ${\rm
Inf}\,V_2$ is contained, in view of connectedness of $G$, in ${\rm
Inf}\,V_1$; this is impossible. If $p'=2$,
then~\eqref{eq::Thm_H_5} becomes
\begin{align*}\label{eq::Thm_H_1}
& \frac{(p_1-l_1)^2}{4}-(p_1-l_1)\left\{-\frac{l_1}{2}\right\} +\left\{
\frac{(l_1+2)^2}{4}\right\}=\\
& \frac{(p_1-l_1)^2}{4}-(p_1-l_1)\left\{\frac{l_1}{2}\right\}
+\left\{
\frac{l_1^2}{4}\right\}=\left(\frac{p_1-l_1}{2}-\left\{\frac{l_1}{2}\right\}
\right)^2\geq 0.
\end{align*}

Thus we have completed the proof for $K_4$-free graphs.

Now let $G$ with ${\rm vol}\,G=(p,q)$ be an arbitrary  $H$-free graph, which is not
isomorphic to $K_4$.

Use the induction on the number of subgraphs of $G$ isomorphic to
$K_4$. If there are no such subgraphs, then the statement is
already proved; therefore, the  basis step is verified.

Let $F$ be a subgraph of $G$ isomorphic to $ K_4 $ and $U = \{v_1, v_2, v_3, v_4 \} $ be
the set of its vertices. Let $\overline{F} = G [V \setminus U]$ and ${\rm vol} \,
\overline{F} = (l, m)$. Note that $ {\rm d}(x, U) \leq 1$ for every vertex $x \in V
\setminus U $, otherwise $G$ is not $H$-free.

If $l = 1$, then $p = 5$, $q=7$; therefore $q \leq \frac{p^2}{4}+1$.

If $\overline{F}$ is isomorphic to $K_4$, then ${\rm d}(U,
V\setminus U)\leq 4$, because each of the vertices of $F$ is
adjacent to not more than one vertex of $\overline{F}$. In this
case $p=8$, $q\leq 16<\lfloor\frac{64}{4}\rfloor+1$.

If $\overline{F}$ is not isomorphic to  $K_4$ and $l\geq 2$, then applying the induction
assumption to $\overline{F}$, we have $m\leq \frac{l^2}{4}+1$. Since $p=l+4$ and $q\leq
m+l+6$, it follows that
$$
\frac{p^2}{4}+1-q\geq
\frac{(l+4)^2}{4}+1-m-l-6=\left(\frac{l^2}{4}+1-m\right)+(l-2)\geq
0,
$$
as required.
\end{proof}

\end{document}